\newtheorem{theorem}{Theorem}[section]
\newtheorem{lemma}[theorem]{Lemma}
\newtheorem{conjecture}[theorem]{Conjecture}
\newtheorem{proposition}[theorem]{Proposition}
\newtheorem{corollary}[theorem]{Corollary}
\theoremstyle{definition}\newtheorem{definition}[theorem]{Definition}
\theoremstyle{definition}\newtheorem{example}[theorem]{Example}
\theoremstyle{definition}\newtheorem{remark}[theorem]{Remark}
\colorlet{colbg}{white}
\colorlet{colfg}{black}
\colorlet{colgraphv}{colfg!75!white}
\colorlet{colgraphe}{colfg!55!white}
\colorlet{colG}{DarkSeaGreen}
\definecolor{colR}{HTML}{CC6677}
\definecolor{colO}{HTML}{DDCC77}
\definecolor{colB}{HTML}{6699CC}
\colorlet{colY}{Gold!90!black}
\tikzstyle{vertex}=[fill=colgraphv,circle,inner sep=0pt, minimum size=4pt]
\tikzstyle{edge}=[line width=1.5pt,colgraphe]
\tikzstyle{labelsty}=[font=\scriptsize]
\tikzstyle{angles}=[draw,black!50!white,fill=black!10!white,thick]
\tikzstyle{hline}=[line width=0.5pt,draw=black!30!white,densely dashed]
\newcommand{\notimplies}{\nRightarrow}
\newcommand{\RR}{\mathbb{R}}
\newcommand{\CC}{\mathbb{C}}
\newcommand{\fieldk}{\mathbb{K}}
\newcommand{\ci}{\mathrm{i}}
\newcommand{\cmap}{\mathbf{c}}
\newcommand{\cimg}{C}
\DeclareMathOperator{\rank}{rank}
\DeclareMathOperator{\im}{im}
\DeclareMathOperator{\coker}{coker}
\begin{document}

\title{Angular Constraints on Planar Frameworks}
\author{Sean Dewar, \and Georg Grasegger, \and Anthony Nixon, \and Zvi Rosen, \and William Sims, \and Meera Sitharam,\and David Urizar}
\date{}

\maketitle

\begin{abstract}
    Consider a collection of points in the plane and the sets of slopes or directions of the lines between pairs of points. It is known that the algebraic matroid on the set of direction constraints between the points is equivalent to the algebraic matroid on the set of distances between the points. This is the well-studied generic 2-dimensional rigidity matroid of a graph. This article studies a higher-level construction built on the slope data: an angle constraint system obtained by prescribing relationships between pairs of slopes. The central question we analyze is: when is an angle system rigid, in the sense that every nontrivial motion alters one of the fixed angles?
    
    We formulate the problem in matricial terms for certain edge-colored graphs, finding precise necessary conditions for when such edge-colored graphs are rigid, and a combinatorial characterization of generic rigidity for a special case. We also prove the validity of an equivalent formulation of the angle matroid as the algebraic matroid of a field extension.
\end{abstract}

\section{Introduction}
Consider a collection of points $p = (p_v)_{v \in V})$ indexed by some finite set $V$ 
and the set $d_{vw} = ||p_v - p_w||^2$ of pairwise squared distances between them.
For generic points $p_v$, (e.\,g.\ with algebraically independent coordinates), the algebraic matroid on the set of distances $d_{vw}$
was characterized by Pollaczek-Geiringer \cite{pollaczekgeiringer1927}
and later rediscovered by Laman \cite{laman1970}. This matroid, called the (generic 2-dimensional) \emph{rigidity matroid}, is denoted
$\mathcal{R}_2$ throughout this paper. Viewing the points and squared distances as vertices and edge lengths of a complete graph on vertex set $V$, a set $E$ of edges is
independent in $\mathcal{R}_2$ if and only if it is $(2,3)$-sparse; that is, the subgraph induced by any non-empty
$E' \subset E$ satisfies the inequality
$|E'| \leq 2|V(E')| - 3$. The pair $(G = (V,E),p)$ is called a \emph{bar-joint framework}.

A natural related question examines the sets of slopes or directions of the lines
between pairs of points. In particular, the equations
$m_{vw} = (y_v - y_w)/(x_v-x_w)$,
where $p_v = (x_v,y_v)$ for each $v \in V$. Surprisingly, the algebraic matroid on this set of elements is precisely the rigidity matroid $\mathcal{R}_2$. This was proved by Whiteley
\cite[Proposition AB.14]{whiteley1987parallel} using an analysis of the rigidity matrix. It was reproved later by
Martin using techniques from algebraic geometry \cite{martin2003}.

In this paper, we examine a construction on top of the slope matroid, given by an angle constraint system. Instead of fixing
 slopes of edges, we fix the angles between chosen pairs of edges.
Consider the example in \Cref{fig:first_example}.
\begin{figure}[ht]
    \centering
    \begin{tikzpicture}[scale=3]
        \node[vertex,label={[labelsty]-90:$v_1$}] (1) at (0,0) {};
        \node[vertex,label={[labelsty]-90:$v_3$}] (2) at (1,0) {};
        \node[vertex,label={[labelsty]90:$v_2$}] (3) at (0,1) {};
        \node[vertex,label={[labelsty]90:$v_4$}] (4) at (1,1) {};
        \begin{scope}[on background layer]
            \draw pic (a1) ["",angles,angle radius=1cm] {angle = 1--3--2};
            
            \draw pic (a2) ["",angles,angle radius=1cm] {angle = 4--2--3};
            
            \draw pic (a3) ["",angles,angle radius=1cm] {angle = 3--4--1};

            \draw pic (a4) ["",angles,angle radius=1cm] {angle = 2--1--4};
        \end{scope}
        \draw[edge,colR] (1)edge(3) (3)edge(2) (2)edge(4);
        \draw[edge,colB] (3)edge(4) (4)edge(1) (1)edge(2);
    \end{tikzpicture}
    \quad
    \begin{tikzpicture}[scale=3]
        \begin{scope}[on above layer]
            \node[vertex,label={[labelsty]-90:$v_1$}] (1) at (0,0) {};
            \node[vertex,label={[labelsty]90:$v_2$}] (2) at (0,1) {};
            \node (v41) at (0.4,1.25) {};
            \node (v42) at (0.6,1.25) {};
            \node (v43) at (0.8,1.25) {};
            \node (v44) at (1,1.25) {};
            \node (3) at (1,0) {};
        \end{scope}
        
        \draw[name path = 23, line width = 0.5mm, colR]  (0,1) -- (1.25,-0.25);
        \draw[name path = l4, line width = 0.5mm,colR] (1,-0.25) -- (1,1.25);
        \draw[name path = l3, line width = 0.5mm,colR] (0.8,-0.25) -- (0.8,1.25);
        \draw[name path = l2, line width = 0.5mm,colR] (0.6,-0.25) -- (0.6,1.25);
        \draw[name path = l1, line width = 0.5mm,colR] (0.4,-0.25) -- (0.4,1.25);

        \begin{scope}[on above layer]
            \path[name intersections={of=23 and l1,by=v1}];
            \path[name intersections={of=23 and l2,by=v2}];
            \path[name intersections={of=23 and l3,by=v3}];
            \path[name intersections={of=23 and l4,by=v4}];
        \end{scope}

        \begin{scope}[on background layer]
            \draw pic ["",angles,angle radius=0.8cm] {angle = 1--2--3};

            \draw pic ["",angles,angle radius=0.8cm] {angle = v41--v1--2};
            \draw pic ["",angles,angle radius=0.8cm] {angle = v42--v2--2};
            \draw pic ["",angles,angle radius=0.8cm] {angle = v43--v3--2};
            \draw pic ["",angles,angle radius=0.8cm] {angle = v44--v4--2};
            
        \end{scope}
        \draw[edge,colR] (1)edge(2);
    \end{tikzpicture}
        \quad
    \begin{tikzpicture}[scale=3]
        \begin{scope}[on above layer]
            \node[vertex,label={[labelsty]-90:$v_1$}] (1) at (0,0) {};
            \node[vertex,label={[labelsty]90:$v_2$}] (2) at (0,1) {};
            \node (4) at (0.8,0) {};
        \end{scope}
        
        \draw[name path = 23, line width = 0.5mm, colR]  (0,1) -- (1.25,-0.25);
        \draw[name path = l4, line width = 0.5mm,colR] (1,-0.25) -- (1,1.25);
        \draw[name path = l3, line width = 0.5mm,colR] (0.8,-0.25) -- (0.8,1.25);
        \draw[name path = l2, line width = 0.5mm,colR] (0.6,-0.25) -- (0.6,1.25);
        \draw[name path = l1, line width = 0.5mm,colR] (0.4,-0.25) -- (0.4,1.25);

        \draw[name path = 14, line width = 0.5mm, colB]  (0,0) -- (0.8,0);

        \draw[name path = c, color = gray, dashed] (0.3,0.5) circle (0.58);

        \begin{scope}[on above layer]
            \path[name intersections={of=c and l1,by=31}];
            \path[name intersections={of=c and l2,by=32}];
            \path[name intersections={of=c and l3,by=33}];
        \end{scope}

        \begin{scope}[on background layer]
            \draw pic ["",angles,angle radius=0.55cm] {angle = 2--31--1};
            \draw pic ["",angles,angle radius=0.55cm] {angle = 2--32--1};
            \draw pic ["",angles,angle radius=0.55cm] {angle = 2--33--1};

            \draw pic ["",angles,angle radius=0.8cm] {angle = 4--1--33};
            \draw pic ["",angles,angle radius=1cm] {angle = 4--1--32};
        \end{scope}
        \draw[edge,colR] (1)edge(2);
        \draw[edge,colB] (2)edge(31) (2)edge(32) (2)edge(33) (31)edge(1) (32)edge(1) (33)edge(1);
    \end{tikzpicture}
    \caption{$K_4$ is dependent in $\mathcal R_2$, i.\,e.\ the distance/slope of one of the pairs of points (vertices) is dependent on  the distances/slopes of the remaining pairs, however, the shown set of angles is independent.}
    \label{fig:first_example}
\end{figure}
Noting that for any subset of $m$ edges, fixing $m-1$ pairwise angles determines all pairwise angles, such an angle constraint system partitions the edges into sets such that the pairwise angles are fixed within each set.

The corresponding matroid governing
 angles appears to be a more stubborn object to characterize.
In this paper,
we approach this problem from algebro-geometric and combinatorial perspectives.
Our main results include \Cref{thm:lineangdirection} which gives a matrix-based characterization of angle rigidity, \Cref{thm:2col} which derives a complete combinatorial characterization in a rich special case, and \Cref{cor:algmat} and \Cref{thm:algmat} which provide algebraic matroidal re-formulations.

There are several potential applications of analyzing rigidity for angle frameworks. The multi-agent formation literature uses angle constraint systems to guide multiple agents toward a desired formation based on angle
information \cite{zhao2015bearing, jing2019angle, chen2020angle}. Lacking a combinatorial analysis of angle systems they use combinatorial characterizations arising in the bar-joint setting as a weak approximation.
Combinatorial characterizations as in \Cref{thm:2col} are important for computer-aided design software that computes a solution, i.e., a framework, to a user-specified geometric constraint system, such as those studied by Haller et al.\ in \cite{haller2012}, which include pairs of angle-constrained lines. Such constraint systems typically take doubly exponential time in the number of variables to solve directly via computational algebra.

\subsection{Previous work on angles}
 
Mathematically, the problem has been primarily discussed in two bodies of literature. In the rigidity theory literature, especially the work of Walter Whiteley, angle constraints are considered in the context of direction and length constraints. More recently, various engineering groups have studied this problem as part of multi-agent formation control.

First, we summarize the rigidity theory results. In \cite{servatius1999constraining}, Servatius and Whiteley proved the necessary sparsity bound $|A| \leq 2|V(E)| - 4$, as well as the fact that this sparsity condition is not sufficient to guarantee independence.
Polygon constraints are cited as one demonstration of the failure of sufficiency, but they add that even if polygons are accounted for, the sparsity bound is not sufficient.
In a subsequent paper \cite{eren2003sensor}, Whiteley and co-authors stated further results on angle arrangements. They introduced the \emph{first-order angle matrix} as an analogue to the traditional rigidity matrix. They conjectured that there is no polynomial-time algorithm to check independence of angle arrangements.
They also described Henneberg-type moves to extend angle arrangements: the 0-extension adds one new vertex $v$ to a graph as well as two angle constraints both centered at vertex $v$. The 1-extension adds a vertex $v$, deletes one angle constraint, and adds three new angle constraints. The theorems are stated there unproven, with reference made to an unpublished article entitled ``Constraining plane geometric configurations in CAD: Angle''. They conclude that these two types of extensions were insufficient to construct all angle-rigid arrangements, since they do not produce vertices of degree five.

In the 2006 doctoral thesis of Zhou \cite{zhou2006combinatorial}, the author considers the problem of angle constraints. A rigorous proof of the validity of 0-extensions (which the author calls ``gradual construction'') is presented, as are some necessary combinatorial conditions for rigidity. Other studies of angles have been made but usually with some fundamental twist, e.\,g.\ considering circle arrangements with constrained angles of intersection as in \cite{saliola2004constraining}.

The multi-agent formation literature is differentiated from the rigidity literature
both in its goals and its mathematical approach. In particular, they often feature practical
results that can be used to guide multiple agents toward a desired formation based on angle
information. The mathematical tools tend to come from control theory and analysis. For
example, the 2015 paper \cite{zhao2015bearing} begins from the 2-d rigidity-theoretic work and generalizes to higher dimensions. Then they use Lyapunov methods to define a control law that can stabilize bearing-rigid formations.
Other recent papers \cite{jing2019angle, chen2020angle} stay focused on the 2-d setting,
but further explore the control law and how perturbed formations stabilize under a control law.
Given the different objectives and toolkit, we leave further integration of the literature for future work.

\subsection{Analogous cases in the literature}

At first glance, the angle matroid appears to be a special case of the point-line incidence
structures studied by Jackson and Owen in \cite{jackson2016characterisation}. In that article, the authors
characterize structures with two types of objects---points and lines---and three types of relations:
\begin{inparaenum}[(i)]
    \item angles between pairs of lines;
    \item perpendicular distance from a point to a line;
    \item distance from point to point.
\end{inparaenum}
Assuming these quantities are generic, they fully characterize the resulting matroid. If you set the distance between $p$ and $\ell$ to $0$ at $p$ lies on $\ell$ and the resulting setup is exactly of the type we consider.
Unfortunately, once genericity is broken, the characterization of  \cite{jackson2016characterisation} no longer applies.

Another seemingly similar setup is that of frameworks with coordinated edge motions, studied by Schulze, Serocold and Theran in \cite{schulze2022coordinated}. In their setting, subcollections of edges
are assigned to ``coordinated classes.'' In addition to the standard rigid motions,
the edges of each color are allowed to change length (additively) by the same fixed amount while still being considered equivalent.
They characterize the resulting matroid (in the $d=2$ case) as the matroid union of the
rigidity matroid $\mathcal{R}_2$ with the transversal
matroid whose bases take one
element from each of the coordinated classes.
We believe that the matroid here takes a very similar form, though our characterization is incomplete.
Indeed, they note in Section 5.3 of their paper, ``the rigidity analysis of such coordinated frameworks seems more complex than the one considered in this paper.''

\subsection{Outline}
The structure of the paper is as follows: In \Cref{s:basic}, we 
formally define angular analogues of concepts from rigidity theory.
In \Cref{sec:necessary}, we prove an important property of angle-rigid frameworks and conjecture that the property is also sufficient for 
angle-rigidity. In \Cref{sec:extensions}, we define extension moves on angle frameworks, much like Henneberg moves, that allow us to characterize an important subclass of arrangements. Finally, in \Cref{sec:algmatroid}, we derive an 
algebraic matroid formalizing angle-rigidity, much as we have in standard 
rigidity theory. Taken as a whole, this represents an important step in
developing angular constraints as an analog to classical rigidity theory.

\section{Basic definitions}
\label{s:basic}
In this section we develop the general theory of angle-rigid frameworks,
from the perspective of the geometry of realizations.

\begin{definition}
    Let $G = (V,E)$ be a graph. We call an injective map $p: V\to \RR^2$ a \emph{realization} of $G$, and we denote by $R^G \subset (\RR^2)^V$ the (Zariski-open) \emph{realization space} of $G$.

    \begin{itemize}
        \item An \emph{angle index set} $A$ is a set of distinct unordered pairs in $E$; for example,\\ $A = \{\{(a_1 b_1),(c_1 d_1)\},\ldots, \{(a_k b_k),(c_k d_k)\} \mid  a_i b_i,c_id_i \in E\}$.
        \item The triple $(G,A,p)$ is called an \emph{angle framework}.
        \item The \emph{angle map} is the map sending each realization to the resulting angles indexed by $A$:\\
        $\theta_A: R^G \rightarrow \RR^{A}, ~ \mathbf{p} = (p_v)_{v \in V} \mapsto \left(\arccos \left( \frac{(p_a-p_b)\cdot (p_c-p_d)}{\|p_a-p_b\| \|p_c-p_d\|} \right) \right)_{\{ab,cd\} \in A}$.
        \item The \emph{angle graph} $\tilde{G}(A)$ is the graph with vertices indexed
        by $E$ and edges by $A$.
        \item The \emph{edge support} $E(A)$ denotes the set of edges in $G$ appearing as half of some pair in the angle index set.
    \end{itemize}
\end{definition}

\begin{example}
    Let us consider the graph $G=(V,E)$ with vertex set $V=\{v_1,v_2,v_3,v_4\}$ and edges $E=\{(v_1v_2),(v_1v_3),(v_1v_4),(v_2v_3),(v_2v_4)\}$.
    Let further $A=\{\{(v_1v_2),(v_1v_3)\},\{(v_2v_3),(v_2v_4)\}\}$.
    \Cref{fig:angleframe} (left) shows the angle framework $(G,A,p)$, where $A$ is indicated by colors and $p_{v_1}=(0,0)$, $p_{v_2}=(1,0)$, $p_{v_3}=(1,1)$ and $p_{v_4}=(0,1)$.
    The angle map $\theta_A$ is then defined by $\theta_A(\{(v_1v_2),(v_1v_3)\})=\theta_A(\{(v_2v_3),(v_2v_4)\})=\pi/4$.
    The angle graph $\tilde{G}(A)$ is depicted on the right.
    \begin{figure}[ht]
        \centering
        \begin{tikzpicture}[scale=1.25]
            \node[vertex,label={[labelsty]below:$v_1$}] (v1) at (0,0) {};
            \node[vertex,label={[labelsty]below:$v_2$}] (v2) at (2,0) {};
            \node[vertex,label={[labelsty]above:$v_3$}] (v3) at (2,2) {};
            \node[vertex,label={[labelsty]above:$v_4$}] (v4) at (0,2) {};
            \draw[edge,colB] (v1)to(v2) (v1)to(v3);
            \draw[edge,colR] (v2)to(v3) (v2)to(v4);
            \draw[edge] (v1)to(v4);
        \end{tikzpicture}
        \qquad\qquad
        \begin{tikzpicture}
            \foreach \a [count=\i from 0] in {{$(v_1v_2)$},{$(v_1v_3)$},{$(v_1v_4)$},{$(v_2v_3)$},{$(v_2v_4)$}}
            {
                \node[vertex,label={[labelsty]\i*72+18:\a}] (e\i) at (\i*72+18:1.5) {};
            }
            \draw[edge,colB] (e0)--(e1);
            \draw[edge,colR] (e3)--(e4);
        \end{tikzpicture}
        \caption{An angle framework (left) with $A$ indicated in colors, and the corresponding angle graph (right).}
        \label{fig:angleframe}
    \end{figure}
\end{example}

\noindent We would like to drop some of the redundancy in the definition of angle frameworks. To do this, we require notions of equivalence up to
some transformation:
\begin{definition}
    Let $G$ be a graph and $A$ an angle index set.  Let $(G,A,p)$ and $(G,A,q)$ be
    two angle frameworks.
    \begin{itemize}
        \item $(G,p)$ and $(G,q)$ are called \emph{similar} if there exists a linear isometry $T$, a vector $z \in \RR^2$ and a scalar $\lambda>0$ such that $q_v = \lambda T (p_v) +z$ for each $v \in V$;
        the affine map $x \mapsto \lambda T(x) + z$ is called a \emph{similarity}.
        \item $(G,A,p)$ and $(G,A,q)$ are \emph{equivalent} if $\theta_A(p) = \theta_A(q)$.
    \end{itemize}
\end{definition}

Let $A_1$ and $A_2$ be angle index sets of $G$.  
Observe that if $\tilde{G}(A_1)$ and $\tilde{G}(A_2)$ partition $E$ into the same set of connected components $c_1,\ldots, c_k$, then  $(G,A_1,p) $ is equivalent to $(G,A_1,q)$ if and only if $(G,A_2,p)$ is equivalent to $(G,A_2,q)$.  
Hence, equivalence is completely determined by the connected components of $\tilde{G}(A)$, which can be viewed in terms of edge-colored graphs.

\begin{definition}
    Let $G=(V,E)$ be a graph.
    We define the \emph{color map} $\cmap$, associating a color to every edge, and call
    $(G,\cmap)$ an \emph{edge-colored graph}.
    By abuse of notation justified above, we also call $(G,\cmap,p)$ an angle framework.
    Furthermore, we let $G_c=(V,E_c)$ where $E_c$ is the set of all edges of color $c$, and we denote by $\cimg_G=\cmap(E)$ and omit $G$ in case it is clear.
\end{definition}

\subsection{The angle-rigidity matrix}

Let $G=(V,E)$ be a graph and $(G,\cmap,p)$ an angle framework.  
For every point $p_v = (x_v,y_v) \in \RR^2$,
$p_v^\perp$ denotes the point formed by applying a $90^\circ$ counter-clockwise rotation to $p_v$.
As a short-hand, we use the notation $p^\perp$ for $(p_v^\perp)_{v \in V}$. For any two vertices $v,w \in V$, we set $X_{vw} = x_v - x_w$, $Y_{vw} = y_v - y_w$, and the vector $P_{vw} = p_v - p_w = (X_{vw},Y_{vw})$.

By differentiating the angle constraints $\theta_A(p_v) = constant$ and scaling the result, we observe that a map $u:V \rightarrow \RR^2$ is an infinitesimal deformation of $(G,\cmap,p)$ that preserves angles between pairs of edges of the same color (now referred to as an \emph{infinitesimal flex}) if and only if for every pair $ab,vw \in E$ with $\cmap(ab)=\cmap(vw)$,
we have
\[ 
    \left( \frac{(P_{ab}\cdot P_{vw})P_{ab} - (P_{ab}\cdot P_{ab})P_{vw}}{P_{ab}\cdot P_{ab}} \right)\cdot (u_a-u_b)
    \: + \: \left( \frac{(P_{ab}\cdot P_{vw})P_{vw} - (P_{vw}\cdot P_{vw})P_{ab}}{P_{vw}\cdot P_{vw}} \right)\cdot (u_v-u_w) = 0.
\]

\noindent By cancellation, we derive that

\[    
    \big( (P_{ab}\cdot P_{vw}) P_{ab} - (P_{ab}\cdot P_{ab})P_{vw} \big)\cdot P_{ab} = 0 
    \quad \text{   and    } \quad
    \big( (P_{ab} \cdot P_{vw})P_{vw} - (P_{vw}\cdot P_{vw})P_{ab}\big)\cdot P_{vw} = 0.
\]

\noindent Hence,

\[
    (P_{ab}\cdot P_{vw}) P_{ab} - (P_{ab}\cdot P_{ab})P_{vw} = \alpha_{ab} P_{ab}^\perp 
    \quad \text{   and   } \quad
    (P_{ab} \cdot P_{vw})P_{vw} - (P_{vw}\cdot P_{vw})P_{ab} = \alpha_{vw} P_{vw}^\perp.\:
\]
for some scalars $\alpha_{ab},\alpha_{vw}\in \RR$.
By measuring the two vectors,
we see that $\alpha_{ab} =  \alpha_{vw} = -P_{ab}^\perp \cdot P_{vw}$.
This allows us to simplify the infinitesimal flex constraint condition to obtain
\begin{align}\label{eq:constraints}
    \left( \frac{P_{ab}^\perp}{P_{ab}\cdot P_{ab}} \right)\cdot (u_a-u_b)
    - \left( \frac{P_{vw}^\perp}{P_{vw}\cdot P_{vw}} \right)\cdot (u_v-u_w) =0.
\end{align}
We say that an infinitesimal flex is \emph{trivial} if it is a restriction of an infinitesimal similarity to the points $\{p_v:v \in V\}$.
It can be easily checked that \cref{eq:constraints} holds for any choice of vertices $a,b,c,d$ with $a\neq b$ and $c \neq d$ when $u$ is a trivial infinitesimal flex.
The set of trivial infinitesimal flexes forms a linear subspace of the linear space of infinitesimal flexes.
Since similarities are formed from translations, rotations and scalings,
the following result is immediate.

\begin{lemma}\label{l:trivflex}
    Let $(G,\cmap,p)$ be an angle framework.
    If there exist vertices $v,w \in V$ where $p_v\neq p_w$,
    then the following vectors form a basis of the trivial infinitesimal flexes of $(G,\cmap,p)$:
    \begin{equation*}
       u^{(1,0)} =((1,0))_{v \in V}, \qquad u^{(0,1)} =((0,1))_{v \in V}, \qquad
        u^p = (p_v)_{v \in V}, \qquad u^{p^\perp} = (p_v^\perp)_{v \in V}.
    \end{equation*}
\end{lemma}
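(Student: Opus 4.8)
The plan is to establish two facts: that the four listed vectors span the space of trivial infinitesimal flexes, and that they are linearly independent under the stated hypothesis. Since trivial infinitesimal flexes are by definition restrictions of infinitesimal similarities, I would first parametrize a smooth one-parameter family of (orientation-preserving) similarities through the identity as $S_t(x) = \lambda(t) R_{\phi(t)}(x) + z(t)$, with $\lambda(0)=1$, $\phi(0)=0$ and $z(0)=0$, where $R_\phi$ denotes rotation by angle $\phi$. Differentiating $t \mapsto S_t(p_v)$ at $t=0$ gives
\begin{equation*}
    \frac{d}{dt}\Big|_{t=0} S_t(p_v) = z'(0) + \lambda'(0)\, p_v + \phi'(0)\, p_v^\perp,
\end{equation*}
using that the derivative of rotation at angle $0$ is the $90^\circ$ anticlockwise rotation, i.e.\ $\frac{d}{d\phi}\big|_{\phi=0} R_\phi(p_v) = p_v^\perp$. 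Writing $z'(0)=(a,b)$, this expresses every trivial infinitesimal flex as a linear combination $a\,u^{(1,0)} + b\,u^{(0,1)} + \lambda'(0)\,u^p + \phi'(0)\,u^{p^\perp}$; conversely, each such combination is realized by choosing the corresponding values of $a$, $b$, $\lambda'(0)$, $\phi'(0)$. Hence the four vectors span the required space.

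The core of the argument is then linear independence, which is exactly where the hypothesis $p_v \neq p_w$ enters. I would suppose a dependence $a\,u^{(1,0)} + b\,u^{(0,1)} + c\,u^p + e\,u^{p^\perp} = 0$, equivalent to the vertexwise system $(a,b) + c\,p_v + e\,p_v^\perp = 0$ holding for every $v \in V$. Taking the difference of this equation at the two vertices $v,w$ with $p_v \neq p_w$ cancels the constant term and yields $c\,q + e\,q^\perp = 0$, where $q := p_v - p_w \neq 0$. Since a nonzero vector $q$ and its perpendicular $q^\perp$ are orthogonal, they are linearly independent, forcing $c = e = 0$; substituting back into the vertexwise equation gives $(a,b) = 0$, so all four coefficients vanish.

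I do not anticipate a genuine obstacle: the spanning step is a direct differentiation, and independence reduces to the orthogonality of $q$ and $q^\perp$. The only point requiring care is the role of the hypothesis $p_v \neq p_w$, which is precisely what guarantees $q \neq 0$; without it (for instance if $p$ were constant) the vectors $u^p$ and $u^{p^\perp}$ could degenerate and independence would fail. I would also remark that infinitesimal flexes arise from the identity component of the similarity group, so orientation-reversing similarities contribute nothing new and the family $S_t$ above already captures all generators.
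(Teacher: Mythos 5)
Your proof is correct and takes essentially the same approach as the paper, which in fact states this lemma without any proof, declaring it ``immediate'' since similarities are formed from translations, rotations and scalings. Your differentiation of $S_t(x)=\lambda(t)R_{\phi(t)}(x)+z(t)$ (identifying translations with $u^{(1,0)},u^{(0,1)}$, scalings with $u^p$, rotations with $u^{p^\perp}$) and the independence check via the orthogonality of $q=p_v-p_w$ and $q^\perp$ simply supply the details the paper leaves implicit.
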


\begin{lemma}\label{lem:constraint}
    Let $(G,\cmap,p)$ be an angle framework.
    Let $u \in (\RR^2)^V$.
    Then $u$ is an infinitesimal flex of $(G,\cmap,p)$
    if and only if for each color $c\in \cimg$,
    there exists $\lambda_c \in \RR$ so that for each $vw \in E$ with $\cmap(vw)=c$,\\[2mm]
    \begin{align*}
        P_{vw}^\perp \cdot (u_v-u_w) = \lambda_c P_{vw}\cdot P_{vw}.
    \end{align*}
\end{lemma}

\begin{proof}
    For each color $c$ choose an edge $vw \in E$ with $\cmap(vw)=c$.
    We now set
    \begin{align*}
       \lambda_c := \dfrac{P_{vw}^\perp \cdot (u_v-u_w)}{P_{vw}\cdot P_{vw}}.
    \end{align*}
    The result follows by inspection of the constraint system given in \cref{eq:constraints}.
\end{proof}

\begin{definition}
    The \emph{angle-rigidity matrix} is the $|E| \times (2|V|+|\cimg|)$ matrix, defined blockwise as
    \begin{align*}
	   R(G,\cmap,p) := \left[ R(G,p) \quad M(G,\cmap,p) \right],
    \end{align*}
    where $R(G,p)$ is the standard 2-d rigidity matrix, and $M(G,\cmap,p)$ is the $|E| \times |\cimg|$ matrix with entries
    \begin{align*}
    	M(G,\cmap,p)_{vw,c} :=
    	\begin{cases}
    		-P_{vw} \cdot P_{vw} = -X_{vw}^2 - Y_{vw}^2, &\text{if } \cmap(vw)=c,\\
    		0, &\text{otherwise}.
    	\end{cases}
    \end{align*}
\end{definition}

By \Cref{lem:constraint},
a map $u\colon V \rightarrow \RR^2$ is an infinitesimal flex of $(G,\cmap,p)$ if and only if there exists $\lambda : \{1,\ldots,|\cimg|\} \rightarrow \RR$ such that $(u,\lambda) \in \ker R(G,c,p^\perp)$.
From this we can immediately deduce the following lemma.

\begin{lemma}\label{l:trivflex2}
    Let $(G,\cmap,p)$ be an angle framework.
    If there exist vertices $v,w \in V$ where $p_v\neq p_w$,
    then the following vectors are contained in the kernel of $R(G,\cmap,p)$:
    \begin{equation}\label{eq:trivialstresses}
        (u^{(1,0)}, \mathbf{0}), \qquad (u^{(0,1)}, \mathbf{0}), \qquad
        (u^{p^\perp}, \mathbf{0}), \qquad (u^{p}, \mathbf{1}),
    \end{equation}
    where each of the $u$ vectors is defined as in \Cref{l:trivflex}, and $\mathbf{0},\mathbf{1} : \{1,\ldots,|\cimg|\} \rightarrow \RR$ are the constant maps $i \mapsto 0$ and $i \mapsto 1$ respectively.
\end{lemma}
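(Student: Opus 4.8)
The plan is to prove the lemma by direct substitution, since membership in a kernel can be verified one row (edge) at a time. First I would unwind the matrix--vector product. For a vector $(u,\lambda) \in (\mathbb{R}^2)^V \times \mathbb{R}^{|c|}$, expanding $R(G,c,p)(u,\lambda)$ using the block form $[R(G,p)\ M(G,c,p)]$ together with the definition of $M$ shows that the coordinate indexed by an edge $vw$ equals
\[
    (p_v - p_w) \cdot (u_v - u_w) - \lambda_{c(vw)} \|p_v - p_w\|^2 .
\]
Hence $(u,\lambda) \in \ker R(G,c,p)$ precisely when $(p_v - p_w)\cdot(u_v - u_w) = \lambda_{c(vw)}\|p_v-p_w\|^2$ for every edge $vw$. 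I would note that this is the same equation as in \Cref{lem:constraint}, except that $p$ appears in the left-hand dot product in place of $p^\perp$; this is exactly the reason the third and fourth vectors in \cref{eq:trivialstresses} are arranged as they are.

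With this criterion in hand, the second step is to check the four vectors one at a time. For the two translations $(u^{(1,0)},\mathbf 0)$ and $(u^{(0,1)},\mathbf 0)$ the displacement field is constant, so $u_v - u_w = 0$ and both sides vanish. For the rotational vector $(u^{p^\perp},\mathbf 0)$ we have $u_v - u_w = p_v^\perp - p_w^\perp = (p_v - p_w)^\perp$, and since every $x \in \mathbb{R}^2$ satisfies $x \cdot x^\perp = 0$, the left-hand side is $(p_v - p_w)\cdot(p_v - p_w)^\perp = 0$, matching the right-hand side with $\lambda = \mathbf 0$. Finally, for the scaling vector $(u^{p},\mathbf 1)$ we have $u_v - u_w = p_v - p_w$, so the left-hand side equals $\|p_v - p_w\|^2 = 1 \cdot \|p_v - p_w\|^2$, which matches the right-hand side with $\lambda = \mathbf 1$.

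Since each of the four vectors satisfies the kernel criterion on every edge, all four lie in $\ker R(G,c,p)$, which completes the proof. I would remark that the hypothesis $p_v \neq p_w$ for some pair $v,w$ is not in fact needed for membership in the kernel; it is inherited from \Cref{l:trivflex}, where it guarantees that the four displacement fields are linearly independent and hence form a basis of the trivial flexes. There is essentially no obstacle here. The only points demanding care are the perpendicularity identity $x \cdot x^\perp = 0$, which accounts for the rotational flex, and keeping track of which constant map $\lambda$ accompanies each displacement---specifically that the uniform scaling $u^{p}$ must be paired with $\mathbf 1$ rather than $\mathbf 0$, reflecting that a scaling alters every edge's squared length at the same rate.
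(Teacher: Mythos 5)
Your proof is correct and takes essentially the same approach as the paper: the paper treats this lemma as an immediate consequence of the flex--kernel correspondence established after \Cref{lem:constraint}, and your row-by-row verification (constant differences vanish, $x \cdot x^\perp = 0$ handles the rotation, and the scaling field pairs with $\mathbf{1}$ against the $-\|p_v-p_w\|^2$ entries of $M$) is exactly that immediate check made explicit. Your closing remark is also accurate: the hypothesis $p_v \neq p_w$ is needed only so that the four vectors form a basis in \Cref{l:trivflex}, not for membership in the kernel.
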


\begin{remark}\label{rem:triv}
    It follows from \Cref{l:trivflex2} that for $(G,\cmap,p)$ with vertices $v,w \in V$ such that $p_v\neq p_w$, the rank of the matrix $R(G,\cmap,p)$ is the same as the rank of the matrix formed by deleting the columns corresponding to the vertices $v$ and $w$.
\end{remark}

\begin{example}
    Let us consider the triangle graph where all edges have the same color and the same graph where there are edges in two colors (see \Cref{fig:triangles}).
    \begin{figure}[ht]
        \centering
        \begin{tikzpicture}[scale=2]
            \node[vertex,label={[labelsty]below:$v$}] (a) at (0,0) {};
            \node[vertex,label={[labelsty]below:$w$}] (b) at (1,0) {};
            \node[vertex,label={[labelsty]above:$z$}] (c) at (60:1) {};
            \draw[edge] (a)to node[below,labelsty] {c} (b) (b)to node[right,labelsty] {c} (c) (c)to node[left,labelsty] {c} (a);
        \end{tikzpicture}
        \qquad
        \begin{tikzpicture}[scale=2]
            \node[vertex,label={[labelsty]below:$v$}] (a) at (0,0) {};
            \node[vertex,label={[labelsty]below:$w$}] (b) at (1,0) {};
            \node[vertex,label={[labelsty]above:$z$}] (c) at (60:1) {};
            \draw[edge] (a)to node[below,labelsty] {$c_1$} (b) (b)to node[right,labelsty] {$c_1$} (c);
            \draw[edge,colB] (c)to node[left,colB,labelsty] {$c_2$} (a);
        \end{tikzpicture}
        
        \caption{Colored triangles $(K_3,\cmap)$ and $(K_3,\cmap')$.}
        \label{fig:triangles}
    \end{figure}
    Then $R(K_3,c,p)$ and $R(K_3,c',p)$ are given by
    \begin{align*}
        &\begin{pmatrix}
             X_{wv} & Y_{wv} & -X_{wv} & -Y_{wv} & 0 & 0 & -X_{wv}^2 - Y_{wv}^2 \\
             X_{wz} & Y_{wz} & 0 & 0 & -X_{wz} & -Y_{wz} & -X_{wz}^2 - Y_{wz}^2 \\
             0 & 0 & X_{vz} & Y_{vz} & -X_{vz} & -Y_{vz} & -X_{vz}^2 - Y_{vz}^2 \\
        \end{pmatrix}
        \quad
        \text{ \normalsize and }\\
        & \begin{pmatrix}
             X_{wv} & Y_{wv} & -X_{wv} & -Y_{wv} & 0 & 0 & -X_{wv}^2 - Y_{wv}^2 & 0 \\
             X_{wz} & Y_{wz} & 0 & 0 & -X_{wz} & -Y_{wz} & -X_{wz}^2 - Y_{wz}^2 & 0 \\
             0 & 0 & X_{vz} & Y_{vz} & -X_{vz} & -Y_{vz} & 0 & -X_{vz}^2 - Y_{vz}^2 \\
        \end{pmatrix}, \quad \text{respectively.}
    \end{align*}
\end{example}

\subsection{Three types of angle-rigidity}
\label{ss:three}
In this section we define rigidity properties for angle frameworks and colored graphs.
\begin{definition}
    Let $(G,\cmap,p) $ be an angle framework.
    We define three types of angle-rigidity in analogy with standard notions of rigidity,
    see for example, \cite{gss1993,whiteley1996}:
    \begin{enumerate}
        \item $(G,\cmap,p)$ is \emph{locally angle-rigid} if all angle frameworks equivalent and sufficiently close to $(G,\cmap,p)$ are similar to $(G,\cmap,p)$.
        \item $(G,\cmap,p)$ is \emph{globally angle-rigid} if all angle frameworks equivalent to $(G,\cmap,p)$ are similar to $(G,\cmap,p)$. 
        \item $(G,\cmap,p)$ is \emph{infinitesimally angle-rigid} if the null space of the angle-rigidity matrix $R(G,\cmap,p)$ is precisely the space of trivial infinitesimal flexes.
    \end{enumerate}
    Furthermore, $(G,\cmap,p)$ is \emph{minimally locally/infinitesimally angle-rigid} if, in addition to being locally/infinitesimally angle-rigid, deleting any edge (and adjusting the coloring accordingly) produces an angle framework that is not locally/infinitesimally angle-rigid.
\end{definition}

From the previous section, we see that the following holds.

\begin{theorem}\label{thm:lineangdirection}
    Let $(G,\cmap,p)$ be an angle framework with $|V| \geq 2$.
    Then $(G,\cmap,p)$ is infinitesimally angle-rigid if and only if $\rank R(G,\cmap,p)= 2|V| + |\cimg| -4$.
\end{theorem}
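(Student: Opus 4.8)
The plan is to reduce the statement to a single dimension count on $\ker R(G,c,p)$ via rank--nullity, using \Cref{l:trivflex} and \Cref{l:trivflex2} to pin down the space of trivial flexes exactly.

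First I would fix the space of trivial infinitesimal flexes to be the subspace $T \subseteq \ker R(G,c,p)$ spanned by the four vectors listed in \eqref{eq:trivialstresses}. By \Cref{l:trivflex2}, all four of these vectors lie in $\ker R(G,c,p)$, where the hypothesis $|V| \geq 2$ together with the injectivity of the realization $p$ supplies a pair of vertices $v,w$ with $p_v \neq p_w$ needed to invoke that lemma. Consequently $T \subseteq \ker R(G,c,p)$ holds unconditionally, and infinitesimal angle-rigidity is by definition precisely the assertion that this containment is an equality, i.e.\ $\ker R(G,c,p) = T$.

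The only genuine computation is to check that $\dim T = 4$, that is, that the four vectors in \eqref{eq:trivialstresses} are linearly independent. I would argue this directly: in a vanishing combination $a(u^{(1,0)},\mathbf 0) + b(u^{(0,1)},\mathbf 0) + c(u^{p^\perp},\mathbf 0) + d(u^p,\mathbf 1) = 0$, the color block forces $d\,\mathbf 1 = \mathbf 0$, hence $d=0$ since $|c|\geq 1$; the vertex block then reads $(a,b) + c\,p_v^\perp = 0$ for every $v$, and if $c \neq 0$ this would force $p_v$ to be constant in $v$, contradicting $p_v \neq p_w$. Thus $c=0$ and then $a=b=0$. (This is, in effect, just the statement of \Cref{l:trivflex} that the trivial flexes form a $4$-dimensional space, transported into the kernel of $R(G,c,p)$ with the appropriate $\lambda$-components.)

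Finally, since $T \subseteq \ker R(G,c,p)$ with $\dim T = 4$, the defining equality $\ker R(G,c,p) = T$ holds if and only if $\dim \ker R(G,c,p) = 4$. As $R(G,c,p)$ has $2|V|+|c|$ columns, rank--nullity gives $\dim\ker R(G,c,p) = 2|V| + |c| - \rank R(G,c,p)$, so $\dim\ker R(G,c,p) = 4$ is equivalent to $\rank R(G,c,p) = 2|V| + |c| - 4$, which is the claim. I expect the linear-independence verification to be the only step requiring any care, the main subtlety being to invoke the hypotheses $|V|\geq 2$ (with injectivity of $p$) and $|c|\geq 1$ exactly where the argument would otherwise collapse.
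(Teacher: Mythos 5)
Your rank--nullity bookkeeping and the independence check are correct, but the step where you declare that infinitesimal angle-rigidity ``is by definition'' the equality $\ker R(G,c,p) = T$ is exactly where the one genuine subtlety of this theorem lives, and your proposal skips it. Under the paper's definitions, an infinitesimal flex of $(G,c,p)$ is a solution of \eqref{eq:constraints}, and by \Cref{lem:constraint} (as the paper states immediately after defining the angle-rigidity matrix) a map $u$ is a flex of $(G,c,p)$ if and only if $(u,\lambda) \in \ker R(G,c,p^\perp)$ for some $\lambda$ --- the kernel of the matrix built from the \emph{rotated} configuration $p^\perp$, not of $R(G,c,p)$ itself. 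These are genuinely different subspaces: the scaling flex $u^p$ preserves all angles while rotating no line, so it lifts to $(u^p,\mathbf{0}) \in \ker R(G,c,p^\perp)$, yet $(u^p,\mathbf{0}) \notin \ker R(G,c,p)$ since each row evaluates to $\|p_v-p_w\|^2 \neq 0$. Relatedly, your parenthetical claim that $T$ is the trivial flex space ``transported into the kernel with the appropriate $\lambda$-components'' is backwards for $R(G,c,p)$: in \eqref{eq:trivialstresses} the rotation $u^{p^\perp}$ is paired with $\mathbf{0}$ and the scaling $u^p$ with $\mathbf{1}$, which is the opposite of how those flexes actually move the lines. So what your dimension count proves is the pure linear-algebra equivalence ``$\ker R(G,c,p)$ equals the span of the vectors in \eqref{eq:trivialstresses} if and only if $\rank R(G,c,p) = 2|V|+|c|-4$''; identifying the left-hand side with infinitesimal angle-rigidity (a statement about angle-preserving deformations, which is what \Cref{p:asimowroth} relies on) is the part that still needs an argument.

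The missing bridge is precisely the first line of the paper's proof: $R(G,c,p)$ is obtained from $R(G,c,p^\perp)$ by a column permutation and sign changes (swap the two columns of each vertex and negate one set; the color columns agree since $\|p_v^\perp - p_w^\perp\| = \|p_v - p_w\|$), hence $\rank R(G,c,p) = \rank R(G,c,p^\perp)$. With that inserted, your argument completes the proof: run the containment-plus-independence count inside $\ker R(G,c,p^\perp)$, where the trivial flexes genuinely lift to $(u^{(1,0)},\mathbf{0})$, $(u^{(0,1)},\mathbf{0})$, $(u^{p},\mathbf{0})$, $(u^{p^\perp},\mathbf{1})$ and your independence verification goes through verbatim, conclude that rigidity is equivalent to $\rank R(G,c,p^\perp) = 2|V|+|c|-4$, and then transfer to $R(G,c,p)$ by the rank equality. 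Everything else in your proposal is sound and in fact spells out dimension-counting details that the paper leaves implicit; the omission is only the conflation of $p$ with $p^\perp$, but that conflation is the entire content of the paper's proof.
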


\begin{proof}
    The matrix $R(G,\cmap,p)$ is formed from $R(G,c,p^\perp)$ by applying a column reordering and multiplying some columns by $-1$,
    hence $\rank R(G,\cmap,p) = \rank R(G,c,p^\perp)$.
    The result now follows from \Cref{l:trivflex2}.
\end{proof}

\begin{proposition}\label{p:asimowroth}
    For a fixed angle framework $(G,\cmap,p)$, both infinitesimal and global
    angle-rigidity imply local angle-rigidity.
    No other implication among the notions of angle-rigidity holds.
\end{proposition}

\begin{proof}
    \noindent \emph{(Infinitesimal angle-rigidity $\implies$ Local angle-rigidity).}
    We first observe that if $(G,\cmap,p)$ is not \emph{affinely spanning} (i.e., the affine span of $\{p_v:v \in V\}$ is $\RR^2$),
    then $(G,\cmap,p)$ is either a single vertex or two vertices joined by an edge (in which case the result is obvious) or else $(G,\cmap,p)$ is not infinitesimally angle-rigid.
    
    Hence, suppose that $(G,\cmap,p)$ is affinely spanning.
    To ``quotient out'' the trivial motions of $(G,\cmap,p)$,
    we can ``pin'' two vertices $a$ and $b$ which are joined by an edge to the points $p_a$ and $p_b$, respectively (it is easy to see that the framework requires at least one edge to be infinitesimally angle-rigid).
    This corresponds to restricting the domain of the map $\theta_A$ (where $A$ is any set of angles with corresponding edge coloring $c$) to the set of realizations with $a$ at $p_a$ and $b$ at $p_b$.
    Since $(G,\cmap,p)$ is infinitesimally angle-rigid,
    it follows from \Cref{lem:constraint,l:trivflex2} that the Jacobian of our domain-restriction for $\theta_A$ at $p$ has rank $2|V|-4$ (since we have now removed the trivial infinitesimal flexes),
    which is maximal.
    By applying the constant rank theorem to our new map,
    we see that any equivalent framework $(G,c,q)$ with $q_a=p_a$ and $q_b=p_b$ that is sufficiently close to $(G,\cmap,p)$ is exactly $(G,\cmap,p)$, which gives the desired result. \\[2ex]
    \noindent \emph{(Global angle-rigidity $\implies$ Local angle-rigidity).}
    By definition.\\[2ex]
    \noindent \emph{(Infinitesimal angle-rigidity $\notimplies$ Global angle-rigidity and Local angle-rigidity $\notimplies$ Global angle-rigidity).}
    We present the following angle framework which is infinitesimally angle-rigid
    (thus locally angle-rigid) but not globally angle-rigid.
    The graph $G $ is $K_4$, the color map assigns the edges $\{(vw),(vb),(wa),(ab)\}$ to color $c_1$, and $\{(va),(wb)\}$ to color $c_2$,
    and the embedding is given by

    \[
        p_v = (0,0), \quad p_w = (1,0), \quad p_a = \left(1 + \frac{1}{\sqrt{2}}, \frac{1}{\sqrt{6}}\right),
        \quad p_b = \left(\frac{1+\sqrt{2}}{2}, \frac{1+\sqrt{2}}{2\sqrt{3}}\right).
    \]

    \begin{figure}
        \centering
        \begin{tikzpicture}[scale=3]
            \node[vertex,label={[labelsty]-90:$v$}] (1) at (0,0) {};
            \node[vertex,label={[labelsty]-90:$w$}] (2) at (1,0) {};
            \node[vertex,label={[labelsty]0:$a$}] (3) at ($(1,0)+1/sqrt(2)*(1,0)+1/sqrt(6)*(0,1)$) {};
            \node[vertex,label={[labelsty]90:$b$}] (4) at ($(0.5,0)+sqrt(2)/2*(1,0)+1/(2*sqrt(3))*(0,1)+sqrt(2)/(2*sqrt(3))*(0,1)$) {};
            \begin{scope}[on background layer]
                \draw pic (a1) ["",angles,angle radius=1cm] {angle = 2--1--4};
                \node[labelsty,above=5pt] at (a1) {30°};
                \draw pic (a2) ["",angles,angle radius=0.25cm] {angle = 3--2--1};
                \node[labelsty,above left=-1pt] at (a2) {150°};
                \draw pic (a4) ["",angles,angle radius=0.5cm] {angle = 1--4--3};
                \node[labelsty,above right=4pt] at (a4) {120°};
        
                \path[name path=D1] (1)--(3);
                \path[name path=D2] (2)--(4);
                \path[name intersections={of=D1 and D2,by=M}];
                \draw pic (am) ["",angles,angle radius=0.35cm] {angle = 3--M--4};
                \node[labelsty,above right=1pt] at (am) {60°};
            \end{scope}
            \draw[edge,colR] (1)edge(2) (2)edge(3) (3)edge(4) (4)edge(1);
            \draw[edge,colB] (2)edge(4) (1)edge(3);
        \end{tikzpicture}
        \quad
        \begin{tikzpicture}[scale=4]
            \node[vertex,label={[labelsty]-90:$v$}] (1) at (0,0) {};
            \node[vertex,label={[labelsty]-90:$w$}] (2) at (1,0) {};
            \node[vertex,label={[labelsty]90:$a$}] (3) at ($(1,0)-1/sqrt(2)*(1,0)+1/sqrt(6)*(0,1)$) {};
            \node[vertex,label={[labelsty]180:$b$}] (4) at ($(0.5,0)-sqrt(2)/2*(1,0)-1/(2*sqrt(3))*(0,1)+sqrt(2)/(2*sqrt(3))*(0,1)$) {};
            \draw[edge,colR] (1)edge(2) (2)edge(3) (3)edge(4) (4)edge(1);
            \draw[edge,colB] (2)edge(4) (1)edge(3);
    
            \begin{scope}[on background layer]
                \coordinate (h1) at (-0.25,0);
                \coordinate (h2) at (1.25,0);
                \coordinate (h4) at ($(1)!1.75!(4)$);
                
                \draw pic (a1) ["",angles,angle radius=0.5cm] {angle = 4--1--h1};
                \node[labelsty,below left=3pt] at (a1) {30°};
                \draw pic (a2) ["",angles,angle radius=0.3cm] {angle = h2--2--3};
                \node[labelsty,above right=-1pt] at (a2) {150°};
                \draw pic (a4) ["",angles,angle radius=0.3cm] {angle = 3--4--h4};
                \node[labelsty,above=4pt] at (a4) {120°};
        
                \path[name path=D1] (1)--(3);
                \path[name path=D2] (2)--(4);
                \path[name intersections={of=D1 and D2,by=M}];
                \draw pic (am) ["",angles,angle radius=0.35cm] {angle = 2--M--3};
                \node[labelsty,above right=1pt] at (am) {60°};

                \draw[hline] (h1)--(h2);
                \draw[hline] (1)--(h4);
            \end{scope}
        \end{tikzpicture}
        \caption{An angle framework with two different realizations.}
        \label{fig:angles}
    \end{figure}

    Computing the angle-rigidity matrix explicitly demonstrates that $(G,\cmap,p)$
    is infinitesimally angle-rigid. However, there is another realization (\Cref{fig:angles}) with the same angles:

    \[
        p_v = (0,0), \quad p_w = (1,0), \quad p_a =\left(1-\frac{1}{\sqrt{2}},\frac{1}{\sqrt{6}}\right),\quad p_b = \left(\frac{1-\sqrt{2}}{2},\frac{\sqrt{2}-1}{2\sqrt{3}}\right).
    \]

    Direct computation of the angle between 12 and 24 shows that the realizations are dissimilar.  Note that our definition of similarity considers two angles to be the same as long as the pairs of defining
    lines intersect in the same set of four angles.     
    These realizations were obtained by computing the defining system of polynomial equations, and
    solving explicitly for the vertex coordinates after fixing some angles.\\[2ex]
    \noindent \emph{(Global angle-rigidity $\notimplies$ Infinitesimal angle-rigidity and Local angle-rigidity $\notimplies$ Infinitesimal angle-rigidity).}
    We present an angle framework which is globally angle-rigid (thus locally angle-rigid) but not infinitesimally angle-rigid. Again we set $G = K_4$. This time, the edges $\{(vw),(va),(vb),(wa)\}$ have color $c_1$ and
    $\{(wb),(ab)\}$ have $c_2$. The embedding into $\RR^2$ (\Cref{fig:example}) is given by\\[2mm]
        \centerline{$p_v = (0,0), \quad p_w = (1,0), \quad p_a = (2,1), \quad p_b = (0,1).$}\\[2mm]        
    Plugging these values into the angle-rigidity matrix, we see that the null space has something extra that corresponds to the infinitesimal vertical motion of $p_b$.
    This implies that the angle framework is \emph{not}
    infinitesimally angle-rigid.

    \begin{figure}[ht]
        \centering
        \begin{tikzpicture}[scale=1.5]
            \draw[black!40!white] (1,1) circle[radius=1cm];
            \draw[-{Latex[]},black!40!white,dotted] (0,0)--(0,2);
            \node[vertex,label={[labelsty]180:$v$}] (1) at (0,0) {};
            \node[vertex,label={[labelsty]-90:$w$}] (2) at (1,0) {};
            \node[vertex,label={[labelsty]0:$a$}] (3) at (2,1) {};
            \node[vertex,label={[labelsty]180:$b$}] (4) at (0,1) {};
            \draw[edge,colR] (1)edge(2) (1)edge(3) (1)edge(4) (2)edge(3);
            \draw[edge,colB] (2)edge(4) (3)edge(4);
        \end{tikzpicture}
        \caption{An infinitesimal vertical motion of a globally angle-rigid angle framework.}
        \label{fig:example}
    \end{figure}
    
    Any configuration of the framework has a representative with
    $p_v = (0,0)$, $p_w = (1,0)$ up to the group action. Take any embedding
    realizing $(G,\cmap)$ with the same angles. Then, it has a curve of
    representatives keeping $p_v$ and $p_w$
    constant. The angles between $(vw), (va)$, and $(wa)$ would then
    determine the location of $a$ to be $(w,v)$.
    Then $p_b$ must be on the $y$-axis based on the angle $\angle wvb$; it must also
    be on the black circle so that the angle $\angle wba$ is fixed. The circle and
    line intersect in only one point, so the graph is globally angle-rigid.
\end{proof}

Having defined infinitesimal rigidity of angle-frameworks, we 
now connect it to infinitesimal rigidity of underlying bar-joint frameworks.
Fix $\coker M$ to be the cokernel (also known as the left kernel or left null space) of the matrix $M$.
We say that $(G,\cmap,p)$ is \emph{independent} if $\coker R(G,\cmap,p) = \{0\}$.
An element of $\coker R(G,\cmap,p)$ is called an \emph{equilibrium stress} of $(G,\cmap,p)$.
Hence, $(G,\cmap,p)$ is independent if and only if it has no non-zero equilibrium stresses.
The structure of $R(G,\cmap,p)$ implies that $\omega \in \RR^E$ is an equilibrium stress of $(G,\cmap,p)$ if and only if it is an equilibrium stress of $(G,p)$ and for every color~$c$ we have
\begin{align*}
    \sum_{vw \in E_c} \omega_{vw} \|p_v-p_w\|^2=0.
\end{align*}

\begin{proposition}\label{prop:stress}
    Let $(G,\cmap,p)$ be an angle framework. 
    For $c_i \in \cimg$, an equilibrium stress of $(G_{c_i},p)$ induces an equilibrium stress of $(G,\cmap,p)$.
    If $(G,\cmap,p)$ is independent, then each bar-joint framework $(G_{c_j},p)$ is independent for each color $c_j$.
\end{proposition}

\begin{proof}
    Let $(G,\cmap,p)$ be an angle framework and fix a color $c_i$. 
    Let $\omega$ be an equilibrium stress of the bar-joint framework $(G_{c_i},p)$.
    Define $\tilde{\omega} \in \RR^E$ by $\tilde{\omega}_{vw}=\omega_{vw}$ for all $vw \in E_{c_i}$ and $\tilde{\omega}_{vw}=0$ otherwise.
    As $\omega$ is an equilibrium stress of $(G_{c_i},p)$,
    we have that $\sum_{vw \in E_{c_i}} \omega_{vw} \|p_v-p_w\|^2=0$,
    see \cite{connelly1982energy}.
    Thus, $\tilde{\omega}$ lies in the cokernel of $R(G,\cmap,p)$.
    Since any equilibrium stress of $(G_{c_j},p)$ would induce an equilibrium stress
    of $(G,\cmap,p)$, independence of $(G,\cmap,p)$ forces independence of $(G_{c_j},p)$.
\end{proof}

\begin{corollary}\label{cor:mono}
    Let $(G,\cmap,p)$ be a monochromatic angle framework.
    Then $(G,\cmap,p)$ is infinitesimally angle-rigid (resp.\ independent) if and only if the bar-joint framework $(G,p)$ is infinitesimally rigid (resp.\ independent).
\end{corollary}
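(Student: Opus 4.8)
The plan is to exploit the very simple shape of the angle-rigidity matrix in the monochromatic case and reduce everything to a single statement about cokernels. When $|c|=1$ we have $R(G,c,p) = [\,R(G,p) \mid M\,]$, where $M$ is the single column with entries $M_{vw} = -\|p_v-p_w\|^2$. My goal is to prove the identity $\coker R(G,c,p) = \coker R(G,p)$; once this is in hand, both claimed equivalences drop out with almost no extra work.

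One inclusion is immediate from the block structure: a vector $\omega$ lies in $\coker R(G,c,p)$ exactly when $\omega^T R(G,p) = 0$ and $\omega^T M = 0$, so certainly $\coker R(G,c,p) \subseteq \coker R(G,p)$. For the reverse inclusion I would invoke the standard fact (already used in the proof of \Cref{cor:stress}, see \cite{Connelly82rigidityand}) that every equilibrium stress $\omega$ of the bar-joint framework $(G,p)$ satisfies $\sum_{vw \in E}\omega_{vw}\|p_v-p_w\|^2 = 0$. Since $M_{vw} = -\|p_v-p_w\|^2$, this is precisely the assertion $\omega^T M = 0$, so every $\omega \in \coker R(G,p)$ already belongs to $\coker R(G,c,p)$. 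Combining the two inclusions yields $\coker R(G,c,p) = \coker R(G,p)$; equivalently, the appended column $M$ lies in the column space of $R(G,p)$, and since both matrices have $|E|$ rows we conclude $\rank R(G,c,p) = \rank R(G,p)$.

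From here both halves of the statement follow. For independence, $(G,c,p)$ is independent iff $\coker R(G,c,p) = \{0\}$ iff $\coker R(G,p) = \{0\}$ iff $(G,p)$ is independent (the forward direction is also just the $G_1 = G$ case of \Cref{cor:stress}). For infinitesimal rigidity I would combine the rank equality with \Cref{thm:lineangdirection}: since $|c|=1$, the structure is infinitesimally angle-rigid iff $\rank R(G,c,p) = 2|V| + |c| - 4 = 2|V| - 3$, and by the rank equality this holds iff $\rank R(G,p) = 2|V| - 3$, which is exactly the condition that $(G,p)$ be infinitesimally rigid as a bar-joint framework.

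I do not anticipate a serious obstacle, since the entire argument rests on the one orthogonality relation $\omega^T M = 0$ for bar-joint stresses, which the paper already has available. The only points needing care are bookkeeping: confirming that the angle-rigidity count $2|V| + |c| - 4$ collapses to the familiar $2|V| - 3$ precisely when $|c| = 1$, and observing that the equivalence ``$(G,p)$ infinitesimally rigid iff $\rank R(G,p) = 2|V| - 3$'' relies on the trivial infinitesimal flex space of a planar bar-joint framework being $3$-dimensional. This dimension count requires the realization to contain two distinct points, which is guaranteed here because $p$ is injective and $|V| \geq 2$.
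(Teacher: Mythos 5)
Your proposal is correct and follows essentially the same route as the paper: the paper's proof likewise reduces everything to the equality of cokernels of $R(G,c,p)$ and $R(G,p)$, obtained from the stress orthogonality relation $\sum_{vw \in E}\omega_{vw}\|p_v-p_w\|^2=0$ (packaged there as \Cref{cor:stress}, whose proof cites the same fact from Connelly that you invoke directly), and then concludes $\rank R(G,c,p)=\rank R(G,p)$. Your write-up merely makes explicit the final bookkeeping (the collapse of $2|V|+|c|-4$ to $2|V|-3$ and the appeal to \Cref{thm:lineangdirection}) that the paper leaves implicit.
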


\begin{proof}
    It follows from \Cref{prop:stress} that $\omega \in \RR^E$ is an equilibrium stress of $(G,\cmap,p)$ if and only if it is an equilibrium stress of $(G,p)$. Hence, $\rank R(G,\cmap,p)=\rank R(G,p)$.
\end{proof}

\subsection{Genericity and angle-rigid graphs}

\Cref{fig:example} gives an example of a locally angle-rigid (in fact, globally angle-rigid) graph which is not
infinitesimally angle-rigid,
however it relies on a very specific geometric coincidence.
In particular, the tangent line to the unique circle containing points $w$, $a$ and $b$
contains the edge $vb$. This coincidence leads us to define an angle-version
of the rigidity-theoretic concept of generic rigidity.

\begin{definition}
    The colored graph $(G,\cmap)$ is said to be \emph{(minimally) angle-rigid} if there exists a non-empty Zariski open subset $S \subset \RR^{2|V|}$ such that for all $p \in S$ the associated angle framework $(G,\cmap,p)$ is (minimally) locally angle-rigid.
\end{definition}

It follows, using \Cref{p:asimowroth}, that our Zariski open subset $S$ can be chosen such that each $p \in S$ is associated to an infinitesimally angle-rigid angle framework $(G,\cmap,p)$.

\begin{proposition}\label{prop:anglerigid}
    Suppose there exists an embedding $p$ such that $(G,\cmap,p)$ is (minimally) infinitesimally angle-rigid. Then the colored graph $(G,\cmap)$ is (minimally) angle-rigid.
\end{proposition}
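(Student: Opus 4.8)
The plan is to reduce both statements to rank conditions on the angle-rigidity matrix $R(G,c,p)$ and then exploit that such conditions are Zariski open in $p$. By \Cref{thm:lineangdirection}, $(G,c,p)$ is infinitesimally angle-rigid exactly when $\rank R(G,c,p)=2|V|+|c|-4$; and by \Cref{l:trivflex2} the four trivial flexes lie in the kernel of $R(G,c,p)$. Since these four vectors are linearly independent whenever $p$ is injective and $|V|\ge 2$ (a short computation: the $\lambda$-coordinate kills the coefficient of $(u^{p},\mathbf 1)$, and two distinct points rule out the rest), we always have $\rank R(G,c,p)\le 2|V|+|c|-4$ on the locus of realizations. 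Hence infinitesimal angle-rigidity is precisely the condition that $R(G,c,p)$ attains its maximal possible rank.

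For the non-minimal statement, suppose $(G,c,p^*)$ is infinitesimally angle-rigid, so some $(2|V|+|c|-4)\times(2|V|+|c|-4)$ minor of $R(G,c,p)$ is nonzero at $p^*$. Each entry of $R(G,c,p)$ is a polynomial in the coordinates of $p$, so this minor is a polynomial that is nonzero at $p^*$, and its non-vanishing locus is a non-empty Zariski open set. Intersecting with the (open) locus of injective realizations yields a non-empty Zariski open $S\ni p^*$ on which $\rank R(G,c,p)\ge 2|V|+|c|-4$; combined with the universal upper bound above this forces equality. Thus every $(G,c,p)$ with $p\in S$ is infinitesimally angle-rigid, hence locally angle-rigid by \Cref{p:asimowroth}, and $(G,c)$ is angle-rigid.

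For the minimal statement I first repackage minimal infinitesimal rigidity at $p^*$ as a single open rank condition. Minimality forces every edge to contribute to the rank, for otherwise deleting a redundant edge would leave the rank at $2|V|+|c|-4$ and preserve infinitesimal rigidity. A short case analysis rules out colors of size one: deleting the unique edge of a color removes one row together with a now-zero column, dropping both the target $2|V|+|c|-4$ and the rank by exactly one, so rigidity would persist. Hence every color has at least two edges and every row of $R(G,c,p^*)$ is independent of the rest, giving $\rank R(G,c,p^*)=|E|=2|V|+|c|-4$. Full row rank is again Zariski open, cut out by a nonvanishing $|E|\times|E|$ minor, so it holds on a non-empty open set $S_1\ni p^*$. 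On $S_1$ the structure $(G,c,p)$ is infinitesimally (hence locally) rigid, while for each edge $e$ the deleted matrix $R(G-e,c,p)$ consists of a proper subset of independent rows and so has rank $|E|-1 < 2|V|+|c|-4$; therefore $(G-e,c,p)$ is not infinitesimally rigid throughout $S_1$.

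The main obstacle is the final upgrade from ``not infinitesimally rigid'' to ``not locally rigid'' for each edge-deleted substructure, since \Cref{p:asimowroth} shows these notions genuinely diverge at special realizations—indeed \Cref{fig:example} is locally but not infinitesimally rigid—so the per-point implication is unavailable. To close the gap I invoke the generic equivalence of infinitesimal and local angle-rigidity (the same Asimow--Roth-type genericity already used in the remark following the definition of angle-rigid graphs): at a generic realization the two coincide, so for each $G-e$ there is a Zariski open dense set $T_e$ on which failure of infinitesimal rigidity forces failure of local rigidity. Setting $S=S_1\cap\bigcap_{e\in E}T_e$—a finite intersection of a non-empty open set with dense open subsets of the irreducible space $\mathbb{R}^{2|V|}$, hence non-empty—every $p\in S$ gives a structure that is locally angle-rigid yet loses local rigidity upon deletion of any edge, i.e.\ minimally locally angle-rigid. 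This exhibits the required open set and shows $(G,c)$ is minimally angle-rigid.
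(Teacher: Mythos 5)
Your treatment of the unparenthesized statement is correct and is essentially the paper's own proof: a nonzero $(2|V|+|c|-4)$-minor at $p^*$ is a polynomial in the coordinates of $p$, its non-vanishing locus is a non-empty Zariski open set, and \Cref{p:asimowroth} upgrades infinitesimal to local angle-rigidity on that set. For the minimal statement you go considerably further than the paper, which disposes of minimality with a parenthetical. Your extra work is genuinely needed: ``deleting $e$ destroys infinitesimal rigidity'' is a Zariski-\emph{closed} condition, so it does not propagate from $p^*$ to an open set on its own. Your two observations---that minimality forbids singleton color classes (deleting the unique edge of a color removes a row and a column and drops the rank and the threshold by exactly one each, so rigidity would survive), and that consequently $\rank R(G,c,p^*)=|E|=2|V|+|c|-4$---correctly repackage minimal infinitesimal rigidity as the single open condition of full row rank, which does propagate. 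That part is sound and fills in details the paper omits.

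The one step that does not stand on ground the paper provides is the last one: passing from ``$(G-e,c,p)$ is not infinitesimally rigid on $S_1$'' to ``$(G-e,c,p)$ is not locally rigid on a dense open $T_e$.'' You are right that this is the crux (\Cref{fig:example} shows the pointwise implication fails), but the ``generic equivalence of infinitesimal and local angle-rigidity'' you invoke is proven nowhere in the paper. The remark you cite is itself an unproven assertion, and it is also logically weaker than what you need: it says that a graph which is locally rigid on some non-empty Zariski open set is infinitesimally rigid on some non-empty Zariski open set. Its contrapositive only tells you that the locally-rigid locus of $(G-e,c)$ contains no non-empty Zariski open subset; since that locus is merely semi-algebraic, this does not force its complement to contain the dense Zariski open $T_e$ you intersect with (a full-dimensional semi-algebraic set and its complement can both fail to contain a non-empty Zariski open set). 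What is actually needed is a pointwise Asimow--Roth statement at regular realizations: where $\rank R(G-e,c,\cdot)$ is locally constant and below $2|V|+|c|-4$, the constant rank theorem makes the fiber of the angle map through $p$ a manifold of dimension at least $5$, strictly larger than the $4$-dimensional similarity orbit it contains, so a nontrivial motion exists. You would need to prove that lemma; with it, your argument is complete. In fairness, the paper's own proof silently skips the same issue---it only ever establishes minimal \emph{infinitesimal} rigidity on the open set, while the definition of minimal angle-rigidity demands minimal \emph{local} rigidity---so your proposal, once that lemma is supplied, is the more rigorous of the two.
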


\begin{proof}
    If the rank of $R(G,\cmap,p)$ is $2|V|+|\cimg| - 4$, then there is a nonzero $(2|V|+|\cimg|-4)$-minor
    determinant. As a function of the parameters $p$, this minor is a polynomial function
    not identically equal to zero. Thus, taking the ideal of all non-zero $(2|V|+|\cimg|-4)$-minor
    determinants yields a nonzero ideal defining a Zariski-closed subset of parameter space.
    The angle framework $(G,\cmap,p)$ is (minimally) infinitesimally angle-rigid for any $p$ in the complement of
    that subset.
\end{proof}

\section{A necessary condition for minimal angle-rigidity}
\label{sec:necessary}
In this section we prove necessary conditions for minimal angle-rigidity. First, using basic linear algebra and dimension counting techniques, one can derive the following Maxwell-type necessary condition. We omit the proof.

\begin{lemma}\label{lem:maxwellangle}
    Let $(G,\cmap)$ be minimally angle-rigid.
    Then for each subgraph $H \subseteq G$, the following inequality holds:
    \begin{equation*}
       |E(H)| \leq 2|V(H)|+\chi(H) - 4,
    \end{equation*}
    where $\chi(H)$ is the number of colors among the edges of $H$.
\end{lemma}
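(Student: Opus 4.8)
The plan is to establish the Maxwell-type inequality by a rank/dimension count on the angle-rigidity matrix, exploiting the structure already laid out in the excerpt. The key observation is that by \Cref{thm:lineangdirection} and \Cref{prop:anglerigid}, minimal angle-rigidity of $(G,c)$ means that for generic $p$ the matrix $R(G,c,p)$ has rank exactly $2|V|+|c|-4$ and, being minimal, every row is essential (deleting any edge drops the rank). This forces $R(G,c,p)$ to have full row rank, so $|E(G)| = 2|V|+|c|-4$ at the level of the whole graph, and independence of the rows is what I would propagate down to subgraphs.

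First I would set up the subgraph count. For a subgraph $H \subseteq G$, consider the submatrix $R(H, c|_H, p)$ of $R(G,c,p)$ formed by the rows indexed by $E(H)$; note its columns naturally live in the $2|V(H)| + \chi(H)$ coordinates corresponding to vertices of $H$ and the colors actually appearing on $H$ (the other columns are identically zero on these rows, by the block structure of $R(G,c,p) = [\,R(G,p)\;\; M(G,c,p)\,]$ and the definition of $M$). Since the full matrix $R(G,c,p)$ has independent rows (for generic $p$, by minimality), any subset of its rows is also independent; in particular the $|E(H)|$ rows indexed by $E(H)$ are linearly independent as vectors in the ambient space. Restricting to the nonzero columns shows these rows are independent inside $\mathbb{R}^{2|V(H)|+\chi(H)}$, hence $|E(H)| \le 2|V(H)| + \chi(H)$.

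The remaining task is to improve the crude bound $2|V(H)|+\chi(H)$ to $2|V(H)|+\chi(H)-4$, i.e.\ to account for the four-dimensional space of trivial flexes restricted to $H$. This is where \Cref{l:trivflex2} does the work: provided $H$ contains two vertices $v,w$ with $p_v \ne p_w$ (automatic for generic $p$ once $|V(H)| \ge 2$ and $H$ has an edge), the four vectors $(u^{(1,0)},\mathbf 0)$, $(u^{(0,1)},\mathbf 0)$, $(u^{p^\perp},\mathbf 0)$, $(u^{p},\mathbf 1)$, restricted to the vertices and colors of $H$, lie in the kernel of $R(H,c|_H,p)$ and are linearly independent. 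Hence the rank of $R(H,c|_H,p)$ is at most $2|V(H)|+\chi(H)-4$, and since its rows are independent this rank equals $|E(H)|$, giving the desired inequality. The main obstacle, and the point that needs care, is the boundary bookkeeping: I must verify that the four trivial flexes remain linearly independent when restricted to $H$ (which requires $p$ generic so that no degeneration collapses them, handled by the genericity from \Cref{prop:anglerigid}) and handle small cases where $H$ has too few vertices or no edges, where the bound is either vacuous or follows trivially.
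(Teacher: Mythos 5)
Your proposal is correct, and it is precisely the ``basic linear algebra and dimension counting'' argument that the paper alludes to for \Cref{lem:maxwellangle} --- the paper itself omits the proof, so your write-up fills that gap in the intended way: restrict the rows of $R(G,c,p)$ to $E(H)$, observe they are supported on the $2|V(H)|+\chi(H)$ columns of $H$ and form exactly $R(H,c|_H,p)$, and cut the rank by the four trivial kernel vectors of \Cref{l:trivflex2}. One detail worth tightening is your parenthetical claim that minimality forces full row rank: when a deleted edge is the unique edge of its color, the deletion also removes a color, so non-rigidity of the smaller structure is measured against the threshold $2|V|+|c|-5$ rather than $2|V|+|c|-4$; the conclusion still holds, since such a row is automatically independent of the others (it is the only row with a nonzero entry in its private color column), while for every other edge the unchanged threshold gives the rank drop you invoke.
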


We next compare the necessary condition described in \Cref{lem:maxwellangle} to two related combinatorial statements.
Throughout the remainder of the section we fix $\mathcal{R}_2(V)$ to be the restriction of the rigidity matroid to the complete graph with vertex set $V$.

\begin{proposition}\label{prop:transversal}
      Let $(G,\cmap)$ be a colored graph with $G=(V,E)$. Consider the following three conditions:
  \begin{enumerate}
      \item\label{it:transversal:basis} There exists a set $F = \{e_1,\ldots,e_{|\cimg|}\}$ where $\cmap(e_i)= c_i$ for each
        color $c_i$, and $(E\setminus F) + e_i$ is a basis of $\mathcal{R}_2(V)$ for each color $c_i$.
      \item\label{it:transversal:cbasis} For each color $c_i$, there exists a set $F_i = \{e_j\}_{j \neq i} \subseteq E$
        where $\cmap(e_j) = c_j$ for each $j$ such that $E\setminus F_i$ is a basis of $\mathcal{R}_2(V)$.
      \item\label{it:transversal:sub} For each subgraph $H \subseteq G$, the following inequality holds: $|E(H)| \leq 2|V(H)|+\chi(H) - 4$,
        where $\chi(H)$ is the number of colors among the edges of $H$.
  \end{enumerate}
  Condition \ref{it:transversal:basis} implies \ref{it:transversal:cbasis}, which implies \ref{it:transversal:sub}.
  Condition \ref{it:transversal:sub} does not imply \ref{it:transversal:basis} or \ref{it:transversal:cbasis}, and \ref{it:transversal:cbasis} does not imply \ref{it:transversal:basis}.
\end{proposition}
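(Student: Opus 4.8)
The plan is to establish the two forward implications and then supply counterexamples for the non-implications. For \ref{it:transversal:basis} $\Rightarrow$ \ref{it:transversal:cbasis}, I would argue that a single set $F = \{e_1,\ldots,e_{|c|}\}$ with the stated property already witnesses \ref{it:transversal:cbasis} for every color simultaneously. Fix a color $i$. The hypothesis says $(E\setminus F)+e_i$ is a basis of $\mathcal{R}_2(V)$. Setting $F_i := F \setminus \{e_i\} = \{e_j\}_{j\neq i}$, we have $E\setminus F_i = (E\setminus F) + e_i$, which is exactly the required basis. So the same $F$ hands us every $F_i$ for free; this direction is essentially definitional and should be short.

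For \ref{it:transversal:cbasis} $\Rightarrow$ \ref{it:transversal:sub}, the idea is a counting argument on an arbitrary subgraph $H \subseteq G$. First note that since each $E\setminus F_i$ is a basis of $\mathcal{R}_2(V)$, it has exactly $2|V|-3$ edges, forcing $|E| = 2|V|-3+(|c|-1) = 2|V|+|c|-4$; thus the \emph{global} bound in \ref{it:transversal:sub} holds with equality. For a proper subgraph $H = (V(H),E(H))$ with $\chi(H)$ colors, I would pick a color $i$ that actually appears in $H$ and restrict the basis $E\setminus F_i$. Since $E\setminus F_i$ is independent in $\mathcal{R}_2(V)$, so is its restriction to $V(H)$, which must therefore satisfy the $(2,3)$-sparsity bound $2|V(H)|-3$. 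The edges of $H$ not in $E\setminus F_i$ are precisely those of $H$ lying in $F_i$, i.e.\ the representatives $e_j$ of colors $j\neq i$ that happen to fall inside $H$; there are at most $\chi(H)-1$ of these (at most one per color of $H$ other than $i$). Combining, $|E(H)| \leq (2|V(H)|-3) + (\chi(H)-1) = 2|V(H)|+\chi(H)-4$, as desired. The mild care needed here is to ensure a color of $H$ is chosen as the ``omitted'' color $i$ so that the count of intruding representatives is bounded by $\chi(H)-1$ rather than $\chi(H)$.

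For the non-implications, I would exhibit explicit small colored graphs. To show \ref{it:transversal:sub} $\notimplies$ \ref{it:transversal:cbasis} (hence also $\notimplies$ \ref{it:transversal:basis}), the goal is a colored graph whose sparsity counts all hold but for which some color admits no choice of representatives of the other colors whose complement is a rigidity basis; a natural source is a graph where $E$ restricted to a single color is already $\mathcal{R}_2$-dependent, since by \Cref{cor:stress} such dependence obstructs independence and thus any basis structure, while the coarse counting inequality can still be arranged to hold. To separate \ref{it:transversal:cbasis} from \ref{it:transversal:basis}, the task is a graph where for each individual color one can delete suitable other-color representatives to leave a basis, yet no \emph{common} representative set $F$ works for all colors at once; this is the standard gap between a family of partial transversals and a single system of distinct representatives, so I would look for the smallest configuration where the relevant bipartite (color-versus-edge) incidence fails Hall's condition for a simultaneous selection while still admitting the color-by-color selections.

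The main obstacle will be the last paragraph: the forward implications are routine once the counting is set up, but constructing minimal, verifiable counterexamples for the three separations requires finding graphs small enough to check $\mathcal{R}_2$-independence by hand (or by an explicit rigidity-matrix rank computation) yet structured enough that the intended property genuinely fails. In particular, ruling out \emph{every} choice of representative set (rather than just an obvious one) for the \ref{it:transversal:sub} $\notimplies$ \ref{it:transversal:cbasis} case is the delicate step, and I expect to lean on \Cref{cor:stress} to convert a visible monochromatic dependence into a clean obstruction.
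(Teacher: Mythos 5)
Your two forward implications are correct and are essentially identical to the paper's proof: for \ref{it:transversal:basis} $\Rightarrow$ \ref{it:transversal:cbasis} take $F_i = F\setminus\{e_i\}$, and for \ref{it:transversal:cbasis} $\Rightarrow$ \ref{it:transversal:sub} split $E(H)$ into $H\cap(E\setminus F_i)$ and $H\cap F_i$ for a color $i$ appearing in $H$, exactly as you do. The genuine gap is in your plan for \ref{it:transversal:sub} $\notimplies$ \ref{it:transversal:cbasis}. You propose starting from a graph in which a single color class is already $\mathcal{R}_2$-dependent, asserting that ``the coarse counting inequality can still be arranged to hold.'' It cannot: by the Pollaczek-Geiringer--Laman characterization, an $\mathcal{R}_2$-dependent edge set contains a subset $E'$ with $|E'| > 2|V(E')|-3$, and the subgraph $H$ spanned by $E'$ is monochromatic, so $\chi(H)=1$ and $|E(H)| > 2|V(H)|-3 = 2|V(H)|+\chi(H)-4$. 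Thus any monochromatic dependence automatically destroys condition \ref{it:transversal:sub}, and no counterexample of this shape exists. (Your appeal to \Cref{cor:stress} is also misplaced: that corollary concerns equilibrium stresses of the angle-rigidity matrix, whereas all three conditions in this proposition are purely about $\mathcal{R}_2$ and counting.) The paper's example (left of \Cref{fig:counter}) is built precisely to dodge this trap: every color class is independent, but the graph contains two copies of $K_4$ minus an edge in the majority color, each completed to a full $K_4$ by an edge of the minority color, with \emph{both} completing edges given the \emph{same} color. A transversal $F_i$ can omit only one of them, so the complement always retains an intact $K_4$ (which is $\mathcal{R}_2$-dependent) and is never a basis, while all subgraph counts in \ref{it:transversal:sub} are satisfied.

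For \ref{it:transversal:cbasis} $\notimplies$ \ref{it:transversal:basis} you offer only a search heuristic, and the Hall's-condition framing identifies the wrong obstruction: in condition \ref{it:transversal:basis} the representatives come from distinct color classes, so a system of distinct representatives always exists; what can fail is the joint matroidal requirement that $(E\setminus F)+e_i$ be a basis of $\mathcal{R}_2(V)$ for every color $i$ simultaneously, which is not a property of individual edge eligibility and hence not captured by Hall's theorem. The paper's example (right of \Cref{fig:counter}) uses two disjoint copies of $K_4$ joined by three edges of the majority color, with each of the two minority colors having exactly one edge in each $K_4$; one then checks the three color-by-color deletions give bases, while any single transversal $F$ must, after adding back one of its elements, leave some $K_4$ intact and so fail. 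Until you produce and verify concrete graphs of this kind, the separation claims remain unproven.
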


\begin{proof}
    \noindent \ref{it:transversal:basis} $\Rightarrow$ \ref{it:transversal:cbasis}. For each $i$ we simply set $F_i = F \setminus e_i$. The sets $F_1,\ldots, F_{|\cimg|}$ satisfy the desired conditions by the definition of the set $F$. \\
    \noindent \ref{it:transversal:cbasis} $\Rightarrow$ \ref{it:transversal:sub}. Consider a subgraph $H \subseteq G$.
    Suppose, without loss of generality, that $c_1$ is among the colors
    on the edges of $H$.
    Let $H_1 = H \cap (E \setminus F_1)$ and $H_2 = H \cap F_1$.
    As $E \setminus F_1$ is a basis in $\mathcal{R}_2$,
    we have that $|E(H_1)| \leq 2|V(H_1)|-3$.
    Since $|E(H_2)| \leq \chi(H) - 1$,
    this implies
    \begin{equation*}
        |E(H)| = |E(H_1)| + |E(H_2)| \leq 2|V(H_1)|-3 + (\chi(H)-1) \leq 2|V(H)| + \chi(H) - 4.
    \end{equation*}

    \noindent \ref{it:transversal:sub} $\notimplies$ \ref{it:transversal:basis} or \ref{it:transversal:cbasis}. Consider the left-hand graph in \Cref{fig:counter}.
    One may check directly that condition \ref{it:transversal:sub} is satisfied. Condition \ref{it:transversal:cbasis}, on the other hand, is not
    satisfied: If \ref{it:transversal:cbasis} held then there would be a red edge that could be omitted leaving behind a basis of $\mathcal{R}_2(V)$.
    However, removing either
    red edge leaves a $K_4$ in the graph. If condition \ref{it:transversal:cbasis} fails, then condition \ref{it:transversal:basis} must also fail.\\
    
    \noindent \ref{it:transversal:cbasis} $\notimplies$ \ref{it:transversal:basis}. Consider the right-hand graph in \Cref{fig:counter}.
    Condition \ref{it:transversal:cbasis} is satisfied since $E \setminus (e_1 \cup e_2^*)$,
    $E \setminus (e_1 \cup e_3^*)$ and $E \setminus (e_2\cup e_3^*)$ are bases of $\mathcal{R}_2(V)$.
    We claim that Condition \ref{it:transversal:basis} fails. Suppose such an $F= \{x,y,z\}$ exists.
    If $F$ contains an edge not contained in one of the two copies of $K_4$, say the edge $x$, then $(E \setminus F) + y = E \setminus \{x,z\}$ leaves some $K_4$ intact,
    which means it cannot be a basis of $\mathcal{R}_2(V)$. We conclude that every edge of $F$ must be contained in some $K_4$.
    If $F$ has all three edges in the two $K_4$'s, one of the copies of $K_4$ must contain two
    edges while the other contains one, say $x$. This means that $(E \setminus F) + x$
    contains all the edges of one of the copies of $K_4$, so also fails to be a basis of $\mathcal{R}_2(V)$.
\end{proof}

\begin{figure}[ht]
    \centering
    \begin{tikzpicture}[scale=2.5]
        \node[vertex] (1) at (0,0) {};
        \node[vertex] (2) at (15:1) {};
        \node[vertex] (3) at (15+60:1) {};
        \node[vertex] (4) at ($1/3*(1)+1/3*(2)+1/3*(3)$) {};
        \node[vertex] (5) at (2.5,0) {};
        \node[vertex] (6) at ($(5)+(180-15:1)$) {};
        \node[vertex] (7) at ($(5)+(180-60-15:1)$) {};
        \node[vertex] (8) at ($1/3*(5)+1/3*(6)+1/3*(7)$) {};
        \draw[edge] (1)edge(2) (2)edge(3) (3)edge(1) (1)edge(4) (2)edge(4);
        \draw[edge] (1)edge(5) (3)edge(7);
        \draw[edge] (5)edge(6) (6)edge(7) (7)edge(5) (5)edge(8) (6)edge(8);
        \draw[edge,colR] (3)edge(4) (7)edge(8);
    \end{tikzpicture}
    \qquad
    \begin{tikzpicture}[scale=2.5]
        \node[vertex] (1) at (0,0) {};
        \node[vertex] (2) at (15:1) {};
        \node[vertex] (3) at (15+60:1) {};
        \node[vertex] (4) at ($1/3*(1)+1/3*(2)+1/3*(3)$) {};
        \node[vertex] (5) at (2.5,0) {};
        \node[vertex] (6) at ($(5)+(180-15:1)$) {};
        \node[vertex] (7) at ($(5)+(180-60-15:1)$) {};
        \node[vertex] (8) at ($1/3*(5)+1/3*(6)+1/3*(7)$) {};
        \draw[edge] (1)edge(2) (2)to node[labelsty,right]{$e_3$}(3) (3)edge(1) (2)edge(4);
        \draw[edge] (1)edge(5) (3)edge(7) (2)edge(6);
        \draw[edge] (5)edge(6) (6)to node[labelsty,left]{$e_3^*$}(7) (7)edge(5) (6)edge(8);
        \draw[edge,colR] (3)to node[labelsty,colR,left=-1pt,pos=0.9]{$e_1$}(4) (5)to node[labelsty,colR,left,pos=0.7]{$e_1^*$}(8);
        \draw[edge,colB] (1)to node[labelsty,colB,right,pos=0.65]{$e_2$}(4) (7)to node[labelsty,colB,right=-1pt,pos=0.9]{$e_2^*$}(8);
    \end{tikzpicture}
\caption{Counterexample graphs.}
\label{fig:counter}
\end{figure}

The graph on the left of \Cref{fig:counter} is not minimally angle-rigid (see \Cref{sec:extensions} for more details).
Hence, property \ref{it:transversal:sub} of \Cref{prop:transversal} (see \Cref{lem:maxwellangle}) is a necessary but not sufficient condition for minimal angle-rigidity.
The graph on the right of \Cref{fig:counter} is minimally angle-rigid but does not satisfy property \ref{it:transversal:basis} of \Cref{prop:transversal}.
Hence, property \ref{it:transversal:basis} of \Cref{prop:transversal} is not a necessary condition for minimal angle-rigidity.
We discuss the potential sufficiency of \ref{it:transversal:cbasis} of \Cref{prop:transversal} in \Cref{t:transveral}.
However, we do have positive results in some specific cases. In \Cref{sec:extensions} we use two simple extension operations to generate rigid angle frameworks and show that \ref{it:transversal:cbasis} of \Cref{prop:transversal} is sufficient (and hence provide a combinatorial characterization) when there are only 2 color classes.

We now give a sharper result showing that property \ref{it:transversal:cbasis} of \Cref{prop:transversal} is necessary. To this end let $(G,\cmap)$ be a colored graph, and let $c_j \in \cimg$ be a color.
Denote by $T_{\cmap,j}(G)$,
where the ground set is $E(G)$ and the bases are sets of $|\cimg|-1$ elements, where for each color $c_i \neq c_j$ there exists exactly one element $e_i$ where $\cmap(e_i) = c_i$. The next lemma, whose proof is simply unpacking the definitions, reformulates property \ref{it:transversal:cbasis} of \Cref{prop:transversal} in terms of transversal.

\begin{lemma}
Let $G=(V,E)$ and let $(G,\cmap)$ be a colored graph. Then
for each color $c_i$, the following are equivalent:
\begin{enumerate}
    \item there exists a set $F_i = \{e_j\}_{j \neq i} \subseteq E$
    where $\cmap(e_j) = c_j$ for each $j$ such that $E\setminus F_i$ is a basis of $\mathcal{R}_2(V)$;
    \item there exists a transversal $X \in T_{\cmap,i}(G)$ and a minimally rigid graph $H$ such that $G = H + X$ and $X \cap E(H) = \emptyset$.
\end{enumerate}
\end{lemma}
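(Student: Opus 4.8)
The plan is to recognise that conditions (i) and (ii) describe the very same partition of the edge set $E$ in two different vocabularies, so the proof reduces to translating the data of one statement into the data of the other. The dictionary I would use is: the omitted set $F_i$ from (i) corresponds to the transversal $X$ in (ii), while the surviving basis $E \setminus F_i$ corresponds to the edge set of the minimally rigid graph $H$.

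First I would handle (i) $\Rightarrow$ (ii). Given $F_i = \{e_j\}_{j \neq i}$ with $c(e_j)=j$, I would set $X := F_i$ and $H := (V, E \setminus F_i)$. Since $F_i$ contains exactly one edge of each colour other than $i$ and none of colour $i$, it is by definition a basis of $T_{c,i}(G)$, so $X$ is the required transversal. Since $E \setminus F_i$ is a basis of $\mathcal{R}_2(V)$, the graph $H$ is minimally rigid on $V$. The identities $E = (E \setminus F_i) \cup F_i$ and $F_i \cap (E \setminus F_i) = \emptyset$ then give $G = H + X$ with $X \cap E(H) = \emptyset$, which is exactly (ii).

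Next I would reverse the construction for (ii) $\Rightarrow$ (i). Given a transversal $X$ and a minimally rigid $H$ with $G = H + X$ and $X \cap E(H) = \emptyset$, the transversal $X$ contains exactly one edge $e_j$ of each colour $j \neq i$, so I would write $X = \{e_j\}_{j \neq i}$ and set $F_i := X$. The disjointness together with $G = H + X$ forces $E(H) = E \setminus X = E \setminus F_i$, and minimal rigidity of $H$ makes this edge set a basis of $\mathcal{R}_2(V)$, which is precisely (i).

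I do not expect any genuine obstacle here: the only point requiring care is confirming that the two pieces of terminology unpack as expected, namely that the bases of $T_{c,i}(G)$ are exactly the colour-$i$-avoiding subsets of $E$ with one edge of every remaining colour, and that the bases of $\mathcal{R}_2(V)$ are exactly the edge sets of minimally rigid (spanning) graphs on $V$. Both follow immediately from the definitions recalled just above, so the entire content of the lemma is the change of language, consistent with the paper's remark that the proof is merely a matter of unpacking definitions.
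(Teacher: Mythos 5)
Your proposal is correct and matches the paper's approach exactly: the paper gives no written proof, stating only that the lemma's ``proof is simply unpacking the definitions,'' and your two-way translation (identifying $F_i$ with the transversal $X$ and $E \setminus F_i$ with $E(H)$) is precisely that unpacking, carried out carefully in both directions.
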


\begin{lemma} \label{l:exclude1}
    Suppose $(G,\cmap)$ is minimally angle-rigid. Then the submatrix of $R(G,\cmap,p)$ excluding the column for any color $c_j$ has rank $2|V|+|\cimg|-4$.
\end{lemma}
\begin{proof}
    Take $p$ generic. Since $(G,\cmap)$ is minimally angle-rigid, $R(G,\cmap,p)$ has $2|V|+|\cimg|-4$ rows and $2|V|+|\cimg|$ columns and has rank $2|V|+|\cimg|-4$. In particular, this implies
    that it has a 4-dimensional kernel as in \Cref{l:trivflex2}. Only the vector $(u^p,\mathbf{1})$ is supported on the color columns, and it has full support.

    Now consider $R_{\hat{i}}(G,\cmap,p)$, the $(2|V|+|\cimg|-4) \times (2|V|+|\cimg|-1)$ matrix obtained by dropping one color column.  Obviously it retains the three first kernel vectors of \Cref{l:trivflex2}, namely $(u^{(1,0)},\mathbf{0})$, $(u^{(0,1)},\mathbf{0})$, and $(u^{p^\perp},\mathbf{0})$. Let $\mathbf{v}$ be in $\ker R_{\hat{i}}(G,\cmap,p)$. The vector $(\mathbf{v},0)$ must then be in $\ker R(G,\cmap,p)$, which implies it is in the span of $(u^{(1,0)},\mathbf{0})$, $(u^{(0,1)},\mathbf{0})$, $(u^{p^\perp},\mathbf{0})$, and $(u^p,\mathbf{1})$. The zero in the final coordinate
    implies that the final vector has coefficient zero in the expansion of $(\mathbf{v},0)$, but this in turn implies that $\mathbf{v}$ is in the span of $(u^{(1,0)},\mathbf{0})$, $(u^{(0,1)},\mathbf{0})$, and $(u^{p^\perp},\mathbf{0})$. Thus the 
    kernel of $R_{\hat{i}}(G,\cmap,p)$ had dimension precisely 3. Thus its rank is
    also $2|V|+|\cimg|-4$.
\end{proof}

\begin{theorem}\label{t:transveral}
    If a colored graph $(G,\cmap)$ is minimally angle-rigid, then for each color $c_j$,
    there exists a transversal $X \in T_{\cmap,j}(G)$ and a minimally
    rigid graph $H$ such that $G = H + X$ and $X \cap E(H) = \emptyset$.
\end{theorem}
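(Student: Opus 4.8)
The plan is to deduce the statement from matroid base-packing. By the preceding lemma it suffices to verify condition \ref{it:transversal:cbasis} of \Cref{prop:transversal}: for each color $j$, one can delete a single edge from every color class $i\ne j$ so that the remaining $2|V|-3$ edges form a basis of $\mathcal R_2(V)$. Fixing $j$, this is exactly a partition $E=B\sqcup X$ in which $B$ is a basis of $M_1:=\mathcal R_2(V)|_E$ and $X$ is a basis of $M_2:=T_{c,j}(G)$, i.e.\ a transversal of the colors $\ne j$. Since $\rank_{M_2}(T)$ equals the number $\chi_{\ne j}(T)$ of colors other than $j$ occurring in $T$, the Nash--Williams/Edmonds matroid-union theorem tells us such a partition into independent sets exists precisely when
\[
|T| \le \rank_{\mathcal R_2}(T) + \chi_{\ne j}(T) \qquad \text{for all } T \subseteq E,
\]
and a dimension count will upgrade ``independent'' to ``basis''.

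First I would fix a generic realization $p$ for which $(G,c,p)$ is independent and infinitesimally angle-rigid, so that by \Cref{thm:lineangdirection} (see also \Cref{thm:algmat}) the matrix $R(G,c,p)=[\,R(G,p)\mid M(G,c,p)\,]$ has full row rank with $|E|=2|V|+|c|-4$. Writing $\|p_e\|^2:=\|p_v-p_w\|^2$ for $e=vw$ and restricting to the rows indexed by any $T\subseteq E$, full row rank forces $|T|=\rank[\,R(G,p)|_T\mid M|_T\,]\le \rank_{\mathcal R_2}(T)+\chi(T)$, where $\chi(T)$ is the number of colors in $T$ (the rank of the color block $M|_T$). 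This is already the packing inequality, but with $\chi(T)$ in place of $\chi_{\ne j}(T)=\chi(T)-[\,j\in T\,]$; the entire content of the theorem is to squeeze out this extra $-1$.

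The crux is therefore the sharpened bound $|T|\le \rank_{\mathcal R_2}(T)+\chi(T)-1$ for every nonempty $T$; call it $(\star)$. I would prove it by dualizing to stresses. Let $Z_T$ be the space of equilibrium stresses of the bar-joint framework $(V,T,p)$, so $\dim Z_T=|T|-\rank_{\mathcal R_2}(T)$, and consider the color-balance map $\beta_T\colon Z_T\to \RR^{\chi(T)}$, $\omega\mapsto\big(\sum_{e\in T,\,c(e)=i}\omega_e\|p_e\|^2\big)_i$. Its kernel consists of stresses of $(V,T,p)$ that are color-balanced, i.e.\ equilibrium stresses of the substructure of $(G,c,p)$ on edge set $T$; since the rows of $R(G,c,p)$ are independent, every substructure is stressless, so $\beta_T$ is injective. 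On the other hand the all-ones covector annihilates the image of $\beta_T$, because for $\omega\in Z_T$ one has $\sum_i\beta_T(\omega)_i=\sum_{e\in T}\omega_e\|p_e\|^2=0$ by the standard self-stress identity (cf.\ \Cref{cor:stress}). As $\chi(T)\ge 1$ the image lies in a hyperplane, whence $\dim Z_T=\rank\beta_T\le\chi(T)-1$, which is exactly $(\star)$. I expect this step---extracting the uniform $-1$ from the self-stress identity---to be the main obstacle, and it is the one place where the specific shape of the angle-rigidity matrix (the $\|p_e\|^2$ color columns) is essential; note that $(\star)$ is independent of $j$, so it settles every color at once.

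Finally I would assemble the pieces. Taking $T=E$ in $(\star)$ and using $|E|=2|V|+|c|-4$ forces $\rank_{\mathcal R_2}(E)=2|V|-3$, so $G$ is rigid and $\rank_{M_1}(E)=2|V|-3$, while plainly $\rank_{M_2}(E)=|c|-1$. Since $(\star)$ implies the matroid-union inequality displayed above for each $j$, the theorem produces a partition $E=B\sqcup X$ with $B$ independent in $\mathcal R_2$ and $X$ a partial transversal of the colors $\ne j$. The count $|B|+|X|=|E|=(2|V|-3)+(|c|-1)$ together with $|B|\le 2|V|-3$ and $|X|\le |c|-1$ forces $|B|=2|V|-3$ and $|X|=|c|-1$; hence $B$ is a basis of $\mathcal R_2(V)$, so $H:=(V,B)$ is $(2,3)$-tight and therefore minimally rigid by Laman's theorem, and $X$ is a transversal in $T_{c,j}(G)$ with $G=H+X$ and $X\cap E(H)=\emptyset$, as required.
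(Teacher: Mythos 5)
Your proposal is correct, and it reaches the theorem by a genuinely different route than the paper. The paper works entirely inside the algebraic matroid framework of \Cref{sec:algmatroid}: invoking \Cref{thm:algmat} to obtain an algebraically independent angle set $A$ of size $2|V|-4$, it inductively trades, one color class at a time, the angles $A_i$ for the slopes $D_i$ minus a single well-chosen edge $e_i$, using \Cref{lemma:star} and transcendence-degree bookkeeping to keep every intermediate set independent of transcendence degree $2|V|-3$; Whiteley's identification of the slope matroid with $\mathcal{R}_2$ then certifies that $E\setminus\{e_2,\ldots,e_{|c|}\}$ is a basis of $\mathcal{R}_2(V)$ and $X=\{e_2,\ldots,e_{|c|}\}$ is the desired transversal. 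You instead stay at the level of the angle-rigidity matrix and matroid union: your inequality $(\star)$, namely $|T|\le\rank_{\mathcal{R}_2}(T)+\chi(T)-1$ for all nonempty $T$, is proved by showing the color-balance map $\beta_T$ on the stress space $Z_T$ is injective (its kernel is exactly the left kernel of the row-submatrix of $R(G,c,p)$ indexed by $T$, and row-independence is supplied by minimal angle-rigidity through \Cref{thm:algmat}) and has image inside the hyperplane $\sum_i x_i = 0$ by the self-stress identity also used in \Cref{cor:stress}; the Nash--Williams--Edmonds theorem then yields the partition, and your dimension count correctly upgrades it to a basis plus a full transversal (taking $T=E$ in $(\star)$ pins $\rank_{\mathcal{R}_2}(E)=2|V|-3$). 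What your route buys: $(\star)$ is a reusable, rank-level sharpening of \Cref{lem:maxwellangle} and of condition \ref{it:transversal:sub} in \Cref{prop:transversal}; it is independent of the color $j$, so all colors are settled at once; and it makes explicit the matroid-union structure that the paper itself only reaches conjecturally in \Cref{conj:structure}---your argument is precisely the ``minimally angle-rigid $\Rightarrow$ union-independent'' half of that picture. What the paper's route buys: it is constructive in its choice of the transversal edges, avoids invoking the matroid union theorem as a black box, and stays within the algebro-geometric toolkit already developed for \Cref{thm:algmat}.
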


\begin{proof}
    \noindent For a generic angle framework $(G,\cmap,p)$, fix the linear map
    \begin{equation*}
        \Phi: \ker R(G,p)^T \rightarrow \RR^{c}, ~ \omega \mapsto \left( \sum_{vw \in E_{c_i}} \omega_{vw} \|p_v-p_w\|^2 \right)_{c_i \in \cimg}.\end{equation*}
    We now calculate the rank of $\Phi$.
    The graph $G$ has $2|V| + |\cimg|-4$ edges and a rigid subgraph, 
     so $R(G,p)$ has $2|V|+|C|-4$ rows and rank $2|V| -3$. Rank-nullity thus implies that $\ker R(G,p)^T$ has dimension $|\cimg| - 1$.
    Note that $\omega \in \ker \Phi$ if and only if it is an equilibrium stress of $(G,\cmap,p)$; minimal angle-rigidity of $(G,\cmap,p)$ thus implies $\Phi$ is injective.
    Therefore, $\rank \Phi = |\cimg|-1$.

    We can actually specify $\im \Phi$ explicitly. Since $\sum_{vw \in E} \omega_{vw}\|p_v-p_w\|^2 = 0$ (see, for example, \cite{connelly1982energy}),
    every point in the image of $\Phi$ is orthogonal to the all-ones vector. Since the rank is $|\cimg|-1$, we conclude that $\Phi$ surjects onto the orthogonal complement of the all-ones vector.
    In particular, there exists $\omega_i \in \ker R(G,p)^T$ such that $\Phi(\omega_i) = \mathbf{u}_i - \mathbf{u}_j$, where $\mathbf{u}_i$ is the vector with entry $i$ equal to 1 and all others equal to 0.
    Moreover, for each $i \neq j$ we can pick $e_i \in E_{c_i}$ so that $\omega_i(e_i) \neq 0$.
    With this, set $X = \{e_i :i \neq j\}$.
    
    Order the rows of $R_{\hat{j}}(G,\cmap,p)$ so that rows corresponding to $X$ are at the top with row $e_s$ above row $e_t$ if $s<t$.
    We now apply the following matrix operations to $R_{\hat{j}}(G,\cmap,p)$ for each $i \neq j$ in turn:
    (i) multiply row $e_i$ by $\omega_i(e_i)$,
    (ii) add $\omega_i(e)$ times row $e$ to row $e_i$ for each $e \in E$. These two steps
    cancel out the entries in $R(G,p)$ in rows
    corresponding to $X$, since $\omega \in \ker R(G,p)^T$. As for the color columns in the rows of $X$, our 
    choice of $\omega_i$ mapping to $\mathbf{u}_i - \mathbf{u}_j$ forces this to output 
    a 1 in the column corresponding to color $c_i$ and zero elsewhere (since color $c_j$ has been dropped). We end up with the following matrix:
    \begin{equation*}
        M=
        \begin{bmatrix}
            \mathbf{0}_{|F| \times 2|V|} & I_{c-1} \\
            R(G -X,p) & A
        \end{bmatrix}
    \end{equation*}
    for some $(|E\setminus F|) \times (c-1)$ matrix $A$.
    As $\rank R_{\hat{j}}(G,\cmap,p) = \rank R(G,\cmap,p)$ (\Cref{l:exclude1}) and $\rank M = \rank R_{\hat{j}}(G,\cmap,p)$,
    we have $\rank R(G-X,p) = \rank M - \rank I_{c-1} = 2|V|-3$.
    Thus $(G-X,p)$ is infinitesimally rigid.
\end{proof}

We conclude by setting forth the conjecture that the matroid takes the form specified by
the second property in \Cref{prop:transversal}, i.e., that the converse of \Cref{t:transveral} holds.

\begin{conjecture}
    A colored graph $(G,\cmap)$ is minimally angle-rigid if and only if for each color $c_j$,
    there exists a transversal edge set $X \in T_{\cmap,j}(G)$ and a minimally rigid graph $H$ such that $G = H \cup X$ and $X \cap E(H) = \emptyset$.
    \label{conj:structure}
\end{conjecture}

\section{Extension moves for angle-rigid graphs}
\label{sec:extensions}

Given a graph $G=(V,E)$, a \emph{0-extension} creates a new graph $G'$ which is obtained from $G$ by adding one new vertex $w$ and 2 new edges both incident to $w$. A \emph{1-extension} creates a new graph $G'$ by deleting an edge $xy$ from $E$ and adding a new vertex $w$ and 3 new edges all incident to $w$ including the edges $wx,wy$.

We generalize these operations to the colored case to form a new colored graph $(G',\cmap')$ from $(G,\cmap)$. In the 0-extension case the only constraint is that the two new edges use colors from the color set of the original graph. For the 1-extension the three new edges use colors from the color set of the original graph but we need an extra constraint.
Specifically, we say that a 1-extension is \emph{color-preserving} if either $\cmap'(wx) = \cmap(xy)$ or $\cmap'(wy) = \cmap(xy)$.

\begin{lemma}\label{lem:0ext}
    Let $(G',\cmap')$ be formed from $(G,\cmap)$ by a 0-extension.
    Then $(G,\cmap)$ is independent if and only if $(G',\cmap')$ is independent.
\end{lemma}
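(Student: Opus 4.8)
The plan is to work directly with the angle-rigidity matrix and its cokernel, interpreting independence of a colored graph as the vanishing of $\coker R(G,c,p)$ for a generic realization $p$ (equivalently, by \Cref{thm:algmat} and its proof, the full-row-rank condition characterizing independence in the angle matroid $\mathcal{A}$). Write the $0$-extension as adjoining a vertex $w$ together with two edges $wa$ and $wb$ (with $a \neq b$, since $G'$ is simple), and let $p'$ be a generic realization of $G'$ whose restriction to $V$ is a generic realization $p$ of $G$. Since the two new edges use colors already present, we have $|c'| = |c|$, so $R(G',c',p')$ is obtained from $R(G,c,p)$ by adjoining two new rows (for $wa$ and $wb$), two new coordinate columns (for $w$), and inserting two nonzero entries into the existing color columns.

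The crucial step exploits that $w$ has degree $2$ in $G'$, so the only rows of $R(G',c',p')$ with nonzero entries in the two columns indexed by $w$ are those of $wa$ and $wb$. Hence any equilibrium stress $\omega' \in \coker R(G',c',p')$ satisfies, reading off the pair of $w$-columns,
\[
  \omega'_{wa}(p'_w - p'_a) + \omega'_{wb}(p'_w - p'_b) = 0 .
\]
For generic $p'$ the vectors $p'_w - p'_a$ and $p'_w - p'_b$ are linearly independent (the points $w,a,b$ are not collinear), which forces $\omega'_{wa} = \omega'_{wb} = 0$. Thus every equilibrium stress of $(G',c',p')$ is supported on $E$.

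It then remains to identify the restriction $\omega := \omega'|_E$ as an equilibrium stress of $(G,c,p)$, for which I would use the characterization recalled after \Cref{thm:lineangdirection}: namely that $\omega'$ is an equilibrium stress of $(G',c',p')$ if and only if it is an equilibrium stress of the bar-joint framework $(G',p')$ and $\sum_{vw \in E'_i} \omega'_{vw}\|p'_v - p'_w\|^2 = 0$ for every color $i$. Because $\omega'_{wa} = \omega'_{wb} = 0$, the two new edges contribute nothing to the equilibrium equations at $a$ and $b$ (and are absent at every other vertex), so $\omega$ is an equilibrium stress of $(G,p)$; likewise the new edges drop out of each color sum, so the color conditions for $(G,c,p)$ hold as well. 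Conversely, any equilibrium stress of $(G,c,p)$ extends to one of $(G',c',p')$ by assigning the value $0$ to $wa$ and $wb$. This yields a linear isomorphism between $\coker R(G',c',p')$ and $\coker R(G,c,p)$, so one cokernel is trivial precisely when the other is, which is the claim.

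The only genuinely delicate point is the bookkeeping of genericity: I must ensure the generic realizations witnessing independence of $(G,c)$ and of $(G',c')$ can be chosen compatibly. This is handled by noting that non-collinearity of $\{w,a,b\}$ is a dense open condition, that the restriction of a generic $p'$ to $V$ is generic, and conversely that a generic $p$ extends to a generic $p'$ by placing $p'_w$ generically; since $\rank R(G,c,p)$ is constant on a Zariski-open set, the equivalence of triviality of the two cokernels for these compatible choices gives the equivalence of independence of the two colored graphs. No new color column is created, so the argument never needs to track any interaction between the $0$-extension and the color structure beyond the trivial fact that vanishing new stresses leave every color sum unchanged.
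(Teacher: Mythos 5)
Your proof is correct and follows essentially the same route as the paper's: the paper writes $R(G',c',p')$ as a block matrix whose old-edge rows have a zero block in the $w$-columns and whose two new rows contain the nonsingular $2\times 2$ block with rows $(p_w-p_x)^T$ and $(p_w-p_y)^T$, concluding $\rank R(G',c',p') = \rank R(G,c,p)+2$; your stress-vanishing argument is exactly the left-kernel reading of that same block structure. The only cosmetic differences are that you phrase the conclusion as a cokernel isomorphism (restriction and zero-extension of stresses) rather than a rank count, and that you spell out the genericity bookkeeping the paper leaves implicit.
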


\begin{proof}
    Let $w$ be the new vertex in $G'$ that is adjacent to $x,y \in V$.
    Choose a generic realization $p'$ of $G'$ and define $p$ to be the realization of $G$ with $p_v = p'_v$ for all $v \in V$.
    Let $A$ be the $2\times 2$ non-singular matrix with rows $(p_w - p_x)^T$ and $(p_w - p_y)^T$ and $O$ be the $|E| \times 2$ all zeroes matrix.
    Then there exists a $2 \times (2|V| + |\cimg|)$ matrix $B$ so that

    \begin{align*}
        R(G',c',p') =
        \begin{pmatrix}
            O  &  R(G,\cmap,p) \\
            A &  B
        \end{pmatrix}.
    \end{align*}

   \noindent Hence, $\rank R(G',c',p') = \rank R(G,\cmap,p) + 2$ as required.
\end{proof}

The same holds for color-preserving 1-extensions.

\begin{lemma}\label{lem:1ext}
    Let $(G',\cmap')$ be formed from $(G,\cmap)$ by a color-preserving 1-extension.
    If $(G,\cmap)$ is independent then $(G',c')$ is independent.
\end{lemma}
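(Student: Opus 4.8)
I want to prove that a color-preserving 1-extension preserves independence, mirroring the 0-extension argument in \Cref{lem:0ext} but accounting for the deleted edge. Let $(G',c')$ be obtained from $(G,c)$ by deleting edge $xy$ and adding a new vertex $w$ adjacent to $x$, $y$, and some third vertex $z$, where (say) $c'(wx) = c(xy)$ witnesses the color-preserving condition. The natural strategy is a stress-based argument: suppose $\omega'$ is a nonzero equilibrium stress of $(G',c',p')$ for a generic realization $p'$, and produce from it a nonzero equilibrium stress $\omega$ of $(G,c,p)$, contradicting the assumed independence of $(G,c)$. Here $p$ is the realization of $G$ that agrees with $p'$ on $V$ (forgetting $w$).

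\textbf{Key steps.}
First I would record the three equilibrium equations at the new vertex $w$. Since $w$ has degree $3$ in $G'$ and the three columns of $R(G',c',p')$ corresponding to the two coordinates of $w$ and (crucially) nothing else are supported only on the edges $wx,wy,wz$, the cokernel condition restricted to $w$'s coordinate columns forces the three stress values $\omega'_{wx},\omega'_{wy},\omega'_{wz}$ to satisfy a $2\times 3$ homogeneous system coming from $R(G,p^\perp)$-type vertex balance at $w$. Generically the three directions $p_w-p_x$, $p_w-p_y$, $p_w-p_z$ span, so this system has a one-dimensional solution space: $(\omega'_{wx},\omega'_{wy},\omega'_{wz})$ is determined up to scale, and in particular all three vanish simultaneously or none do. Second, I would define $\omega$ on $E(G)$ by $\omega_e = \omega'_e$ for every edge $e$ shared by $G$ and $G'$, and set $\omega_{xy}$ equal to a suitable multiple of $\omega'_{wx}$ (using the color match $c'(wx)=c(xy)$) so that the stress ``folds'' the path $x\text{–}w\text{–}y$ back onto the edge $xy$. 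Third, I would verify that this $\omega$ is an equilibrium stress of $(G,c,p)$: the bar-joint equilibrium at every vertex of $V$ other than $x,y,z$ is unchanged, the balance at $x,y,z$ works because eliminating $w$ from the vertex-balance equations of $(G',p'^\perp)$ is exactly the classical 1-extension reduction for $\mathcal{R}_2$, and the per-color summation condition $\sum_{vw\in E_i}\omega_{vw}\|p_v-p_w\|^2=0$ is where the color-preserving hypothesis is used: the contribution of $wx$ and $wy$ to their color sums must be rewritten as a contribution of $xy$, which is only legitimate when $xy$ carries the color of $wx$ (or $wy$).

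\textbf{The main obstacle.}
The delicate point is the color-sum bookkeeping. For the ordinary rigidity matroid the 1-extension reduction is standard, but the angle-rigidity stress carries the extra constraints $\sum_{vw\in E_i}\omega_{vw}\|p_v-p_w\|^2=0$, one per color, and the squared-length weights $\|p_w-p_x\|^2$, $\|p_w-p_y\|^2$, $\|p_w-p_z\|^2$ do not match $\|p_x-p_y\|^2$. I expect the cleanest route is not to fold explicitly but to mimic \Cref{lem:0ext} at the matrix level: exhibit a block decomposition
\begin{align*}
    R(G',c',p') \sim
    \begin{pmatrix}
        A & B \\
        O & R(G,c,p)
    \end{pmatrix},
\end{align*}
obtained by row operations that use the generic invertibility of the $2\times 2$ block of $w$-coordinate entries to clear the $wx,wy,wz$ rows, after which the surviving lower block is a matrix whose rank equals $\rank R(G,c,p)$ precisely because the color-preserving condition lets the eliminated edge data be absorbed into the existing column for edge $xy$ in the correct color. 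Establishing that this reduced lower block really is (row-equivalent to) $R(G,c,p)$ — rather than some perturbation of it — is the step that will require care, and it is exactly here that independence of $(G,c)$ must be invoked to conclude independence of $(G',c')$. Because the statement is only an implication (independence of $(G,c)$ implies independence of $(G',c')$) and not an equivalence, I only need the rank to be large enough, which should follow once the generic rank computation on the $w$-block goes through.
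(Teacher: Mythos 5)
There is a genuine gap, and it is exactly at the step you flag as ``requiring care.'' Both of your routes insist on a \emph{generic} realization $p'$, and that is precisely what blocks them. At a generic position of $w$, the stress-folding fails at the first vertex-balance check: equilibrium at $x$ would require replacing the force $\omega'_{wx}(p_x-p_w)$ by $\omega_{xy}(p_x-p_y)$, which is impossible unless $p_w$ lies on the line through $p_x$ and $p_y$. Likewise, in your block-decomposition route, eliminating the two $w$-columns from the rows $wx,wy,wz$ leaves a combined row supported on $x,y,z$ and up to three color columns which is \emph{not} row-equivalent to the row of the deleted edge $xy$, and there is no evident reason why the resulting lower block should have rank at least $\rank R(G,c,p)$; asserting that it does is the entire content of the lemma, not a detail. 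The missing idea in the paper's proof is a deliberately non-generic specialization: keep $p$ generic on $V$ but place the new vertex at $p^t_w = tp_x+(1-t)p_y$, $t\in\mathbb{R}\setminus\{0,1\}$. Then adding $(1-t)/t$ times row $wy$ to row $wx$ annihilates the $w$-entries of row $wx$; deleting rows $wy,wz$ and the $w$-columns (which contribute exactly rank $2$, since $p^t_w-p^t_y$ and $p^t_w-p^t_z$ are independent) leaves a matrix $M_t$ whose new row is, after rescaling, \emph{exactly} the row of edge $xy$ in $R(G,c,p)$ when $c'(wx)=c'(wy)=c(xy)$, so $M_t=R(G,c,p)$ on the nose. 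Working at a non-generic realization is harmless because independence at any single realization already implies independence of the colored graph (\Cref{prop:anglerigid}).

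Even with the collinear placement, the genuinely mixed-color case $c'(wy)\neq c'(wx)=c(xy)$ cannot be closed by an exact identification: the surviving row carries an unwanted entry $-t\|p_x-p_y\|^2$ in the color column $c'(wy)$, and correspondingly a folded stress would violate the color-sum constraints by $\pm\, t\,\omega_{xy}\|p_x-p_y\|^2$ for every fixed $t\neq 0$. So a limiting argument is unavoidable, and the paper runs it at the level of matrices, where it is cheap: $\lim_{t\to 0}M_t = R(G,c,p)$, so by lower semicontinuity of rank, $\rank M_t \geq \rank R(G,c,p)$ for all sufficiently small $t$, giving $\rank R(G',c',p^t)=\rank R(G,c,p)+2$. (Running the limit on stresses instead, as your contradiction strategy would require, is delicate: normalized stresses of $(G',c',p^t)$ can concentrate on the edge $wy$ as $p^t_w\to p_y$ and fold to zero, so compactness does not obviously produce a nonzero stress of $(G,c,p)$.) Finally, note that independence of $(G,c)$ enters only at the very end, via the rank identity above; it plays no role inside the reduction, contrary to your suggestion.
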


\begin{proof}
    Suppose $(G',\cmap')$ is formed from $(G,\cmap)$ by a 1-extension that removes an edge $xy$,
    adds a new vertex $w$ and adds the edges $wx,wy,wz$ for some other vertex $z$.
    Further suppose that $\cmap'(wx) = \cmap(xy)$.
    Choose a generic realization $p$ of $(G,\cmap)$ and define for each $t \in \RR \setminus \{0,1\}$ the realization $p^t$ of $(G',c')$ where $p^t_v= p_v$ for all $v \in V$ and $p^t_w = tp_x + (1-t) p_y$.
    
    Define $M'_t$ to be the matrix formed from $R(G',c',p^t)$ by adding $(1-t)/t$ times row $wy$ to row $wx$.
    The only rows with non-zero entries in the $w$ columns are row $wy$ (with $p^t_w - p^t_y$) and row $wz$ (with $p^t_w - p^t_z$);
    this is because in row $wx$ and column $w$ we have
    \begin{align*}
        P^t_{wx} + ((1-t)/t)P^t_{wy} = (1-t)P_{yx}+(1-t)P_{xy} = 0
    \end{align*}
    Now define $M_t$ to be the matrix formed from $M'_t$ by deleting rows $wy$ and $wz$ and the columns corresponding to $w$, and then multiplying row $wx$ by $(1-t)^{-1}$.
    As $P^t_{wy}$ and $P^t_{wz}$ are linearly independent,
    we have $\rank M_t = \rank(G',\cmap',p^t) -2$.
    
    Suppose $\cmap'(wx) = \cmap'(wy) = \cmap(xy)$.
    The row $wx$ of $M_t$ is of the form
    \begin{align*}
        ( \quad \overbrace{P_{xy}}^{x} \quad \ldots \quad \overbrace{-P_{xy}}^{y} \quad \ldots \quad  \overbrace{-P_{xy}\cdot P_{xy}}^{\cmap(xy)} \quad ).
    \end{align*}
    Hence, $M_{t} = R(G,\cmap,p)$ for all values of $t$,
    and so $\rank R(G',\cmap',p^{1/2}) = \rank R(G,\cmap,p) +2$ as required.
    Now suppose $\cmap'(wx) \neq \cmap'(wy)$.
    The row $wx$ of $M_t$ is of the form
    \begin{align*}
        ( \quad \overbrace{(1-t)P_{xy}}^{x} \quad \ldots \quad \overbrace{-(1-t)P_{xy}}^{y} \quad \ldots \quad \overbrace{-(1-t)P_{xy}\cdot P_{xy}}^{\cmap(xy)} \quad \overbrace{-t P_{xy} \cdot P_{xy}}^{\cmap'(wy)} \:\: ).
    \end{align*}
    We now note that $\lim_{t \rightarrow 0} M_t = R(G,\cmap,p)$,
    hence for sufficiently small $t$ we get $\rank R(G',c',p^{t}) = \rank R(G,\cmap,p) +2$ as required.
\end{proof}

We next show that in the special case when $|\cimg|=2$ they are enough to derive a complete characterization. To this end we need the following basic lemma.

\begin{lemma}\label{lem:base}
    Let $(K_4,c)$ be a colored graph with $|\cimg| \geq 2$.
    Then, there exists an independent angle framework $(K_4,c,p)$.
\end{lemma}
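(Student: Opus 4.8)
The plan is to prove that $(K_4, c)$ admits an independent realization for any coloring with $|c| \geq 2$ by explicitly analyzing the angle-rigidity matrix $R(K_4, c, p)$ and showing it has full rank $|E| = 6$ for a suitable $p$. Since $K_4$ has $6$ edges and $2|V| + |c| - 4 = 4 + |c|$, when $|c| = 2$ this equals $6$, so independence (full row rank) coincides with infinitesimal angle-rigidity; for $|c| \geq 3$ we have more colors than can be distinguished on $6$ edges, but independence still just means the $6$ rows are linearly independent, equivalently $\coker R(K_4,c,p) = \{0\}$.

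First I would reduce to understanding equilibrium stresses. By the characterization stated just before \Cref{cor:stress}, a vector $\omega \in \mathbb{R}^E$ is an equilibrium stress of $(K_4,c,p)$ if and only if it is an equilibrium stress of the bar-joint framework $(K_4, p)$ \emph{and} satisfies $\sum_{vw \in E_i} \omega_{vw}\|p_v - p_w\|^2 = 0$ for each color $i$. For generic $p$, the space of equilibrium stresses of $(K_4, p)$ is one-dimensional (since $K_4$ has exactly $|E| - (2|V|-3) = 6 - 5 = 1$ independent stress), spanned by some nonzero $\omega^0$. Therefore independence of $(K_4,c,p)$ reduces to checking that this single generic stress $\omega^0$ fails the extra color constraints. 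The key step is thus to show: for generic $p$ and any coloring with $|c| \geq 2$, the unique (up to scale) stress $\omega^0$ of $(K_4,p)$ does \emph{not} satisfy $\sum_{vw \in E_i} \omega^0_{vw}\|p_v - p_w\|^2 = 0$ for every color $i$.

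The main obstacle is verifying this non-degeneracy cleanly. The approach I would take is to pick a convenient color class containing at least one edge, say color $1$, and observe that because $|c| \geq 2$, some color class $E_i$ is a \emph{proper} nonempty subset of $E$. The generic stress $\omega^0$ of $K_4$ has all entries nonzero (a standard fact for $K_4$, since any zero entry would force a stress on a strict subgraph, but proper subgraphs of $K_4$ are independent in $\mathcal{R}_2$). Then $\sum_{vw \in E_i} \omega^0_{vw}\|p_v - p_w\|^2$ is a nonzero polynomial expression in the coordinates of $p$: it is not identically zero because the weights $\|p_v - p_w\|^2$ vary independently while the stress coefficients are fixed rational functions, so one can exhibit a single $p$ where the sum is nonzero, and genericity does the rest via a Zariski-open argument as in \Cref{prop:anglerigid}. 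Concretely, I would argue that the map $p \mapsto \big(\sum_{vw \in E_i}\omega^0_{vw}(p)\|p_v - p_w\|^2\big)_i$ is not the zero map, so its nonvanishing locus is a nonempty Zariski-open set; any $p$ therein gives $\coker R(K_4,c,p) = \{0\}$.

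The delicate point is confirming that at least one color constraint genuinely fails rather than being forced to vanish by a hidden algebraic identity among the stress coefficients and squared lengths. I expect this to be the crux, and I would handle it by direct computation for the worst case: when $|c| = 2$ with color classes of sizes $5$ and $1$, or $4$ and $2$, etc.\ I would take an explicit generic-looking point set (e.g.\ with algebraically independent coordinates), compute $\omega^0$ from the $1$-dimensional kernel, and check the two color sums are not simultaneously zero; since there are only finitely many partition types of the $6$ edges into $2$ colors up to symmetry, and each yields a nonzero polynomial condition, a single common generic $p$ works for all of them. For $|c| \geq 3$, having more colors only adds constraints while the stress space remains one-dimensional, so the obstruction to having a nonzero stress is even stronger, and the same witness $p$ suffices. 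This establishes the existence of an independent $(K_4, c, p)$ for every $|c| \geq 2$.
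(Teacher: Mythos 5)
Your proposal is correct, but it takes a genuinely different route from the paper's proof. The paper argues in two steps: for the base case $|c|=2$ it verifies directly that each of the five non-isomorphic bichromatic colorings of $K_4$ (\Cref{fig:K4col}) yields a full-row-rank $R(K_4,c,p)$ at a random realization, and for $|c|=k+1\geq 3$ it inducts on the number of colors, merging color $k+1$ into color $k$ and using $\rank R(K_4,c',p)\leq\rank R(K_4,c,p)$ (the merged color column is a sum of two columns of the finer matrix, so coarsening can only lose rank). You instead work in the stress picture: via the cokernel characterization stated before \Cref{cor:stress}, together with the facts that the generic stress space of $(K_4,p)$ is one-dimensional and nowhere zero, the lemma reduces to showing $\sum_{vw\in F}\omega^0_{vw}\|p_v-p_w\|^2\neq 0$ for proper nonempty $F\subsetneq E$ at generic $p$; this disposes of all $|c|\geq 2$ simultaneously with no induction, and your key claim is exactly the $K_4$ case of the (unlabelled) corollary following \Cref{thm:2col}, which the paper can only derive downstream of \Cref{thm:2col} but which you obtain independently, so there is no circularity. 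One caution: your stated reason that the subset sum cannot vanish identically (``the lengths vary independently while the stress coefficients are fixed'') is not an argument, since the $\omega^0_{vw}(p)$ are themselves rational functions of $p$, and indeed the sum over \emph{all} of $E$ does vanish identically (Connelly's relation invoked in the proof of \Cref{cor:stress}); a hidden identity is therefore a live possibility that only your fallback --- explicit evaluation at a witness point for each of the finitely many proper subsets (nine up to the symmetry of $K_4$) --- actually rules out. With that finite verification carried out, your argument is sound, and its reliance on an asserted computation is no heavier than the paper's own ``check by choosing random realizations.''
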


\begin{proof}
    There are 5 non-isomorphic bichromatic graphs $(K_4,c)$, see \Cref{fig:K4col}, which we can easily check are independent by choosing random realizations.

    \begin{figure}[ht]
        \centering
        \begin{tikzpicture}[scale=1.2]
            \node[vertex] (1) at (0,0) {};
            \node[vertex] (2) at (1,0) {};
            \node[vertex] (3) at (1,1) {};
            \node[vertex] (4) at (0,1) {};
            \draw[edge,colR] (1)--(3) (1)--(4) (2)--(3) (2)--(4) (3)--(4);
            \draw[edge,colB] (1)--(2);
        \end{tikzpicture}
        \quad
        \begin{tikzpicture}[scale=1.2]
            \node[vertex] (1) at (0,0) {};
            \node[vertex] (2) at (1,0) {};
            \node[vertex] (3) at (1,1) {};
            \node[vertex] (4) at (0,1) {};
            \draw[edge,colR] (1)--(4) (2)--(3) (2)--(4) (3)--(4);
            \draw[edge,colB] (1)--(2) (1)--(3);
        \end{tikzpicture}
        \quad
        \begin{tikzpicture}[scale=1.2]
            \node[vertex] (1) at (0,0) {};
            \node[vertex] (2) at (1,0) {};
            \node[vertex] (3) at (1,1) {};
            \node[vertex] (4) at (0,1) {};
            \draw[edge,colR] (1)--(3) (1)--(4) (2)--(3) (2)--(4);
            \draw[edge,colB] (1)--(2) (3)--(4);
        \end{tikzpicture}
        \quad
        \begin{tikzpicture}[scale=1.2]
            \node[vertex] (1) at (0,0) {};
            \node[vertex] (2) at (1,0) {};
            \node[vertex] (3) at (1,1) {};
            \node[vertex] (4) at (0,1) {};
            \draw[edge,colR] (2)--(3) (2)--(4) (3)--(4);
            \draw[edge,colB] (1)--(2) (1)--(3) (1)--(4);
        \end{tikzpicture}
        \quad
        \begin{tikzpicture}[scale=1.2]
            \node[vertex] (1) at (0,0) {};
            \node[vertex] (2) at (1,0) {};
            \node[vertex] (3) at (1,1) {};
            \node[vertex] (4) at (0,1) {};
            \draw[edge,colR]  (1)--(4) (2)--(3)  (3)--(4);
            \draw[edge,colB] (1)--(2) (1)--(3) (2)--(4);
        \end{tikzpicture}
        \caption{The 5 non-isomorphic bichromatic colorings of $K_4$.}
        \label{fig:K4col}
    \end{figure}
    
    Suppose the result holds for any coloring of $K_4$ with at least 2 and at most $k \geq 2$ colors,
    and suppose $(K_4,c)$ is a colored graph with $|\cimg| = k +1$.
    Let $c'$ be the $k$-coloring of $K_4$ formed by setting every edge of color $c_{k+1}$ to be of color $c_k$.
    The result now follows as $(K_4,c')$ is independent and $\rank R(K_4,c',p) \leq \rank R(K_4,c,p)$ for any realization $p$ of $K_4$.
\end{proof}

For the next result, if $G=(V,E)$ is a graph and $E$ is a circuit in  $\mathcal{R}_2$ then we say that $G$ is an \emph{$\mathcal{R}_2$-circuit}.

\begin{theorem}\label{thm:2col}
    Let $(G,\cmap)$ be a colored graph with $|\cimg|=2$.
    Then the following are equivalent:
    \begin{enumerate}
        \item \label{thm:2col1} $(G,\cmap)$ is minimally angle-rigid.
        \item \label{thm:2col2} $|E|=2|V|-2$, and $G$ contains a unique $\mathcal{R}_2$-circuit that contains edges of both colors.
        \item \label{thm:2col3} $(G,\cmap)$ can be constructed from a bichromatic copy of $K_4$ by a sequence of 0-extensions and color-preserving 1-extensions.
    \end{enumerate}
\end{theorem}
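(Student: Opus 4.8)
The plan is to prove the cycle of implications \ref{thm:2col3} $\Rightarrow$ \ref{thm:2col1} $\Rightarrow$ \ref{thm:2col2} $\Rightarrow$ \ref{thm:2col3}; routing the last arrow through \ref{thm:2col3} rather than proving \ref{thm:2col2} $\Rightarrow$ \ref{thm:2col1} directly lets me sidestep a genericity computation (that a certain stress functional is not identically zero). First I record a translation of \ref{thm:2col1}: by \Cref{thm:lineangdirection} a realization is infinitesimally angle-rigid exactly when $\rank R(G,c,p) = 2|V|+|c|-4 = 2|V|-2$, and minimality of a rigid realization forces $|E| = 2|V|-2$ with the rows of $R(G,c,p)$ independent. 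Since independence is a Zariski-open condition, \ref{thm:2col1} is equivalent to the statement that the generic realization $(G,c,p)$ is independent with $|E| = 2|V|-2$, and I use this reformulation throughout.

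For \ref{thm:2col3} $\Rightarrow$ \ref{thm:2col1} I would induct on the construction sequence. The base case is a bichromatic $K_4$, which is generically independent by \Cref{lem:base} and has $|E| = 6 = 2\cdot 4 - 2$. A $0$-extension adds one vertex and two edges, and a color-preserving $1$-extension adds one vertex and a net of two edges, so both moves preserve $|E| = 2|V|-2$; by \Cref{lem:0ext} and \Cref{lem:1ext} they also preserve generic independence. Hence every graph produced by such a sequence is generically independent with $|E| = 2|V|-2$, giving \ref{thm:2col1}.

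For \ref{thm:2col1} $\Rightarrow$ \ref{thm:2col2} I would use the stress description stated after \Cref{thm:lineangdirection}: an $\omega \in \RR^E$ is an equilibrium stress of $(G,c,p)$ if and only if it is an equilibrium stress of $(G,p)$ and $\sum_{vw \in E_i}\omega_{vw}\|p_v-p_w\|^2 = 0$ for each color $i$. The crucial point is that every bar-joint stress satisfies $\sum_{vw \in E}\omega_{vw}\|p_v-p_w\|^2 = 0$ (the identity used in \Cref{cor:stress}), so for $|c| = 2$ the two color conditions coincide, and the angle stresses are exactly the bar-joint stresses in the kernel of the single functional $\ell(\omega) := \sum_{vw \in E_1}\omega_{vw}\|p_v - p_w\|^2$. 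Writing $W$ for the bar-joint stress space of the generic $(G,p)$, independence of $(G,c,p)$ forces $W \cap \ker \ell = \{0\}$, hence $\dim W \le 1$; combined with $|E| = 2|V|-2$ (which gives $\dim W \ge 1$) this yields $\dim W = 1$ and $\rank_{\mathcal{R}_2}(E) = 2|V|-3$, so $G$ has exactly one $\mathcal{R}_2$-circuit $C$, namely the support of the generator $\omega^0$ of $W$. If $C$ were monochromatic then $\ell(\omega^0)$ would equal $0$ (when $C$ has only color $2$) or the full sum $\sum_{vw\in E}\omega^0_{vw}\|p_v-p_w\|^2 = 0$ (when $C$ has only color $1$); either way $\omega^0$ would be a nonzero angle stress, contradicting independence. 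Thus $C$ is bichromatic and \ref{thm:2col2} holds.

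The main work, and the expected obstacle, is \ref{thm:2col2} $\Rightarrow$ \ref{thm:2col3}, which I would prove by induction on $|V|$ using reverse extension moves. The base case $|V| = 4$ forces $G = K_4$ with bichromatic circuit, i.e.\ a bichromatic $K_4$. For $|V| \ge 5$ the rank condition makes $G$ spanning rigid, so $\delta(G) \ge 2$, while $\sum_v \deg(v) = 4|V|-4$ forces a vertex $w$ of degree $2$ or $3$. A degree-$2$ vertex lies in no $\mathcal{R}_2$-circuit, so deleting it (a reverse $0$-extension) preserves the unique bichromatic circuit and the count $|E| = 2|V|-2$, and induction applies. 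The difficulty is the degree-$3$ case, where I would delete $w$ and add one edge $e^*$ among its three neighbours $x,y,z$. A rank computation shows $G-w$ has $\mathcal{R}_2$-rank either $2|V'|-3$ (forcing $w \in V(C)$ with all three edges at $w$ in $C$) or $2|V'|-4$ (forcing $C$ to avoid $w$); in either situation $e^*$ must be chosen so that $G' = (G-w)+e^*$ again has a \emph{unique} $\mathcal{R}_2$-circuit, and for the rigidity-matroid part I would invoke the standard reverse-$1$-extension property that at least one of the three candidate edges restores the correct rank. The genuine obstacle is carrying this out while simultaneously (a) keeping the resulting unique circuit bichromatic and (b) assigning $e^*$ a color that makes the inverse move color-preserving, i.e.\ $c'(e^*) \in \{c(wx),c(wy)\}$ for the retained pair. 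Reconciling the rigidity-forced choice of $e^*$ with these two color constraints is where a careful case analysis is needed, possibly switching to a different low-degree vertex or a different neighbour pair whenever the naive choice yields a monochromatic circuit or breaks color-preservation.
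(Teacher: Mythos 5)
Your cycle \ref{thm:2col3} $\Rightarrow$ \ref{thm:2col1} $\Rightarrow$ \ref{thm:2col2} $\Rightarrow$ \ref{thm:2col3} is the same cycle the paper proves, and two of the three arrows are in good shape. The arrow \ref{thm:2col3} $\Rightarrow$ \ref{thm:2col1} coincides with the paper's proof (\Cref{lem:base}, \Cref{lem:0ext}, \Cref{lem:1ext}). Your arrow \ref{thm:2col1} $\Rightarrow$ \ref{thm:2col2} is correct but genuinely different from the paper's: the paper deduces it from the transversal-type necessary condition \Cref{t:transveral}, whose proof runs through the algebraic-matroid and transcendence-degree machinery, whereas you argue directly with the cokernel description of $R(G,c,p)$, noting that for $|c|=2$ the two color conditions on a stress collapse to a single linear functional $\ell$ on the bar-joint stress space $W$ (because every equilibrium stress satisfies $\sum_{vw\in E}\omega_{vw}\|p_v-p_w\|^2=0$), so independence forces $\dim W=1$, a unique circuit, and bichromatic support of its generating stress. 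That is a more elementary, self-contained route and it is sound, including the preliminary claim that minimality forces $|E|=2|V|-2$ with independent rows.

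The genuine gap is in \ref{thm:2col2} $\Rightarrow$ \ref{thm:2col3}, which is the substantive content of the theorem: your proposal ends by naming the difficulty (``a careful case analysis is needed, possibly switching to a different low-degree vertex or a different neighbour pair'') rather than resolving it, and that case analysis is essentially the entire proof. Two concrete things are missing. First, the tool you plan to ``invoke'' is not directly available: the standard reduction statement (\Cref{lem:lamanreduce}) is about Laman graphs, but in your case $w\notin V(C)$ the graph $G-w$ has rank $2|V'|-4$ and is not Laman (it still contains the circuit $C$), so no off-the-shelf lemma guarantees a rank-restoring edge among the three candidate pairs. The paper's fix is a specific trick: delete an edge $e$ of $C$ (chosen not incident to both neighbours of $w$ in $C$) so that $G-e$ \emph{is} Laman, apply \Cref{lem:lamanreduce} to $w$ inside $G-e$, check via criticality of $V(C-e)$ that the new edge has at most one end in $C$, and then re-add $e$ to recover $C$ as the unique circuit. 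Second, and harder, is the case where every degree-$3$ vertex lies in $C$: there the reduction must delete an edge $xy$ of $C$ itself, and after re-adding $xy$ the unique circuit $C'$ of the reduced graph is in general \emph{not} $C$, so bichromaticity is not inherited and must be argued. The paper does this with a dedicated color analysis (two blue edges $av,bv$ and one red edge $cv$ at the reduced vertex; the only bad configuration is when that red edge is the unique red edge of the whole graph), escapes the bad configuration by switching among the at least four degree-$3$ vertices of $C$, and needs the separate observation that $C=K_4$ forces, by connectivity, some degree-$3$ vertex outside $C$, so that case falls back to the first argument. Without these steps the induction does not close; your outline correctly locates the hard spot but leaves exactly that spot unproved.
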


In the proof, we need the following: Let $G=(V,E)$ be a graph and, for $X\subset V$, let $i_G(X)$ denote the number of edges in the subgraph of $G$ induced by $X$. We say that $G$ is \emph{Laman} if $i_G(X)\leq 2|X|-3$ for all $X$ with $|X|\geq 2$ and $i_G(V)=2|V|-3$. Also $X\subset V$ is \emph{critical} if $i_G(X)=2|X|-3$.
We also use $d(X,Y)$ to denote the number of edges in $G$ of the form $xy$ with $x\in X,y\in Y$ for disjoint sets $X,Y\subset V$.

Recall that \cite{pollaczekgeiringer1927,laman1970} showed that $G$ is minimally rigid (as a generic bar-joint framework) in the plane if and only if $G$ is Laman. Hence, all $\mathcal{R}_2$-circuits are rigid.
It is now immediate that condition \ref{thm:2col2} implies that $G$ is rigid as a bar-joint framework. An elementary property of such rigid graphs is that they are 2-edge-connected and any 2-edge-separation has one component of size 1 (i.\,e.\ the separation simply separates a degree 2 vertex from the rest of the graph).
The condition is also checkable efficiently by the
pebble game algorithm \cite{jacobs1997pebble}.

\begin{lemma}[\cite{pollaczekgeiringer1927,laman1970}]\label{lem:lamanreduce}
    Let $G=(V,E)$ be a Laman graph and let $v\in V$ be a vertex of degree 3 in $G$. Then there exists a pair $\{x,y\}\subset N(v)$ such that $G-v+xy$ is a Laman graph.
\end{lemma}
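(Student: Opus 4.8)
The plan is to argue by contradiction, proving the contrapositive: assuming that none of the three candidate edges on $N(v)=\{a,b,c\}$ can be added, I will exhibit a vertex set violating the Laman count in $G$ itself. First I would record the bookkeeping. Since $G$ is Laman, $|E|=2|V|-3$ and $i_G(X)\le 2|X|-3$ for every $X$ with $|X|\ge 2$. Deleting the degree-$3$ vertex $v$ leaves $G-v$ with $2|V|-6=2|V(G-v)|-3-1$ edges, exactly one short of the Laman count on $|V|-1$ vertices, so for any pair $\{x,y\}\subset N(v)$ the graph $G-v+xy$ has the correct number of edges and can fail to be Laman only through a violated sparsity inequality. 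Adding $xy$ raises $i(X)$ by $1$ precisely for the sets $X$ containing both $x$ and $y$, so $G-v+xy$ fails to be Laman if and only if some $X$ with $x,y\in X$ is critical in $G-v$ (the case $xy\in E(G-v)$ is subsumed, since then $\{x,y\}$ itself is critical). Hence, if all three additions fail, I obtain critical sets $X_{ab}\supseteq\{a,b\}$, $X_{ac}\supseteq\{a,c\}$, $X_{bc}\supseteq\{b,c\}$ in $G-v$, and none of them can contain all of $a,b,c$: if, say, $a,b,c\in X_{ab}$, then $i_G(X_{ab}\cup\{v\})=i_{G-v}(X_{ab})+3=2|X_{ab}|$, contradicting the Laman bound on $G$.

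The combinatorial core is then to merge these three critical sets into a single critical set containing $\{a,b,c\}$. Here I would invoke the standard closure property, which follows from the supermodularity of $i_G$ (equivalently, submodularity of $X\mapsto 2|X|-i_G(X)$): if $X,Y$ are critical in $G-v$ and $|X\cap Y|\ge 2$, then $X\cup Y$ is critical. If some pair among $X_{ab},X_{ac},X_{bc}$ meets in at least two vertices, their union is critical and contains $a,b,c$, and we are done. Otherwise each pairwise intersection is exactly the single shared neighbor $\{a\}$, $\{b\}$, $\{c\}$ respectively (and the triple intersection is empty). In this remaining case I would count directly: because the three sets meet pairwise in single vertices, no edge of $G-v$ is induced by two of them at once, so their induced edge sets are disjoint and $i_{G-v}(W)\ge (2|X_{ab}|-3)+(2|X_{ac}|-3)+(2|X_{bc}|-3)$ for $W:=X_{ab}\cup X_{ac}\cup X_{bc}$; inclusion--exclusion gives $|W|=|X_{ab}|+|X_{ac}|+|X_{bc}|-3$, whence $i_{G-v}(W)\ge 2|W|-3$, which by sparsity forces $W$ to be critical in $G-v$ and to contain $a,b,c$.

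In every case I arrive at a set $Z$ that is critical in $G-v$ and contains all three neighbors of $v$, and the contradiction is then immediate: $i_G(Z\cup\{v\})=i_{G-v}(Z)+3=(2|Z|-3)+3=2|Z|=2|Z\cup\{v\}|-2$, violating the Laman sparsity of $G$. Therefore not all three additions can fail, and the successful pair $\{x,y\}$ yields a Laman graph (the added edge is genuinely new, since a successful pair lies in no critical set and in particular is not already an edge). The step I expect to be the main obstacle is the singleton-intersection case: this is exactly where the naive "take a union" argument stalls, because unions of critical sets meeting in a single vertex need not be critical, so the delicate edge-disjointness count above is essential; some care is also needed to confirm that the three cases are genuinely exhaustive and that restoring $v$ to a tight set always overshoots the Laman bound by exactly one.
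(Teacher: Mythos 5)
Your proof is correct and complete. Note that the paper does not actually prove this lemma: it is stated with an attribution to \cite{PG,Laman}, so there is no in-text argument to compare against --- what you have written is essentially the classical argument from those sources. All the key points check out: the edge count $|E(G-v)| = 2(|V|-1)-4$ correctly reduces failure of $G-v+xy$ to the existence of a set critical in $G-v$ containing both $x$ and $y$ (and your observation that an existing edge $xy$ makes $\{x,y\}$ itself critical neatly subsumes the simplicity issue that is often glossed over); the exclusion of a single critical set containing all of $a,b,c$ via $i_G(X\cup\{v\}) = i_{G-v}(X)+3 = 2|X\cup\{v\}|-2$ is right; the union of two tight sets meeting in at least two vertices being tight is the standard supermodularity fact; and in the genuinely delicate case where the three critical sets meet pairwise in the singletons $\{a\},\{b\},\{c\}$, your count is exactly the needed one --- pairwise singleton intersections force the induced edge sets to be disjoint, so $i_{G-v}(W)\ge \sum_i (2|X_i|-3) = 2|W|-3$ with $|W|=\sum_i |X_i|-3$ by inclusion--exclusion, whence $W$ is tight, contains $N(v)$, and restoring $v$ overshoots the sparsity bound by one. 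The case analysis is exhaustive (nested or equal sets fall into the large-intersection case or the all-three-neighbors case), so the contradiction stands and the lemma follows.
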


\noindent It is convenient to introduce the terms \emph{0-reduction} and \emph{color-preserving 1-reduction} for the colored graph operations that `undo' a 0-extension and a color preserving 1-extension respectively.

\begin{proof}[Proof of \Cref{thm:2col}]
    \ref{thm:2col1} $\Rightarrow$ \ref{thm:2col2}:
    This follows from \Cref{t:transveral}.
    
    \ref{thm:2col2} $\Rightarrow$ \ref{thm:2col3}:
    Let $\mathcal{D}$ be the set of all colored graphs that satisfy \ref{thm:2col2}.
    It suffices to show that any colored graph with at least 5 vertices in $\mathcal{D}$ can be reduced to another graph in $\mathcal{D}$ by a 0-reduction or color-preserving 1-reduction.
    Choose any colored graph $(G,\cmap) \in \mathcal{D}$ with $|V| \geq 5$.
    Since $|E|=2|V|-2$, $G$ has a vertex of degree less than 4 and since $G$ contains a unique $\mathcal{R}_2$-circuit it has no vertex of degree less than 2.
    If $(G,\cmap)$ has a vertex $v$ of degree 2 then we can apply a 0-reduction to that vertex. (Note that if $G-v$ was monochromatic then it would contain an $\mathcal{R}_2$-circuit which violates condition \ref{thm:2col2} and this circuit would also be contained in $G$.)
    Hence, we may assume that the minimum degree in $G$ is 3.

    Since $|E|=2|V|-2$ and the minimum degree is 3, $G$ contains at least four vertices of degree 3.
    Let $H$ be the unique $\mathcal{R}_2$-circuit and suppose that there is a vertex $v$ with degree 3 that is not in $H$.
    If $v$ has three neighbors in $H$, then for any $e \in H$, $(H-e)\cup v$ fails the Laman count; this would imply it
    contains a circuit $H'$ distinct from $H$, contrary to assumption. Thus, $v$ has at most two neighbors in $H$.
    Delete from $G$ an edge $e$ of $H$ (not incident to both of the two neighbors) and we have a minimally rigid graph. Hence, by \Cref{lem:lamanreduce} there is a 1-reduction on $v$ to a smaller minimally rigid graph with any color on the new edge. Moreover the vertex set of $H-e$ is a critical set so the new edge has at most one end-vertex in $H$. We can now re-add $e$ (with its color) to get the same unique $\mathcal{R}_2$-circuit.

    If all degree 3 vertices are in $H$, we may assume $H\neq K_4$ (otherwise, $G$ would be disconnected).
    Since $|V(H)|\geq 5$, we may delete $xy$ from $H-v$ such that $|\{x,y\}\cap N(v)|\leq 1$. Then $G-xy$ is a minimally rigid graph and \Cref{lem:lamanreduce} implies there exists a 1-reduction deleting $v$ in $G-xy$, and adding $ab$ (with color prescribed by the color-preserving reduction) for some $a,b\in N(v)$, that results in a smaller minimally rigid graph. Now add $xy$ back with its original color to obtain $G'$. The graph $G'$ contains a unique $\mathcal{R}_2$-circuit $H'$.
 
    It remains to show that $H'$ contains edges of both colors. If the 3 edges incident to $v$ have the same color this is trivial. Suppose $v$ is incident to two blue edges $av,bv$ and one red edge $cv$. The conclusion is also trivial if either $ac$ or $bc$ is added in the 1-reduction. Hence, we may suppose that $ab$ is added, this forces us to color $ab$ blue. The conclusion fails if and only if $ac$ was the unique red edge of $(G,\cmap)$. However, there are at least 4 vertices of degree 3 in $H$. So we may choose $u\neq v$ in $H$ of degree 3 and repeat the argument. Clearly, at the final step, there is more than one red edge in $(G,\cmap)$.
    
    \ref{thm:2col3} $\Rightarrow$ \ref{thm:2col1}:
    This follows from \Cref{lem:0ext,lem:1ext,lem:base}.
\end{proof}

We expect that it may be possible to use similar techniques (though at least one additional operation is certainly needed) to resolve the case when $|\cimg|=3$. However, in general, different techniques seem to be needed.
The 2-color case has a nice application for bar-and-joint frameworks.

\begin{corollary}
    Let $(G,p)$ be a generic framework in $\RR^2$ such that $G$ is an $\mathcal{R}_2$-circuit.
    Let $\omega$ be an equilibrium stress of $(G,p)$.
    Then, for a non-empty $F \subseteq E$,
    we have
    \begin{equation*}
        \sum_{vw \in F} \omega_{vw} \|p_v-p_w\|^2 = 0 \qquad \text{ if and only if   } \qquad  F = E.
    \end{equation*}
\end{corollary}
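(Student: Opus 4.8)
The plan is to prove the two implications separately: the implication $F=E \Rightarrow \text{(sum)}=0$ is a standard fact about equilibrium stresses, while the substantive direction is the contrapositive of the converse, which I would reduce to the combinatorial characterisation in \Cref{thm:2col}. Before starting I would record three facts about an $\mathcal{R}_2$-circuit $G=(V,E)$: it has exactly $|E|=2|V|-2$ edges (a circuit has rank $|E|-1=2|V|-3$); it is the unique $\mathcal{R}_2$-circuit contained in itself (every proper subset is independent, by minimality of circuits); and, for generic $p$, its space of equilibrium stresses is one-dimensional with every nonzero stress $\omega$ fully supported. The last point I would justify by noting that if $\omega_e=0$ for some edge $e$ then $\omega$ is a stress of $G-e$, which is independent in $\mathcal{R}_2$ and hence carries no nonzero stress, forcing $\omega=0$.

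For $F=E$ I would simply invoke the identity $\sum_{vw\in E}\omega_{vw}\|p_v-p_w\|^2=0$, valid for every equilibrium stress; this is the fact already used in the proof of \Cref{cor:stress} (see \cite{Connelly82rigidityand}), and it follows by pairing each vertex equilibrium condition $\sum_{w\sim v}\omega_{vw}(p_v-p_w)=0$ against $p_v$ and summing over $v$.

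For the converse I would argue by contradiction. Suppose $F$ is non-empty, $F\neq E$, and $\sum_{vw\in F}\omega_{vw}\|p_v-p_w\|^2=0$. Subtracting this from the full-edge identity of the previous paragraph yields $\sum_{vw\in E\setminus F}\omega_{vw}\|p_v-p_w\|^2=0$ as well, and $E\setminus F$ is also non-empty. I would then colour $F$ with colour $1$ and $E\setminus F$ with colour $2$, obtaining a bichromatic $(G,c)$. By the stress characterisation recorded just before \Cref{cor:stress}, the (fully supported, hence nonzero) vector $\omega$ is an equilibrium stress of the line-angle incidence structure $(G,c,p)$, so $(G,c,p)$ is dependent. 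But $(G,c)$ satisfies condition \ref{thm:2col2} of \Cref{thm:2col}: we have $|E|=2|V|-2$, and the unique $\mathcal{R}_2$-circuit of $G$ is all of $E$, which meets both colour classes. Hence \Cref{thm:2col} yields a generic realisation at which $(G,c,\cdot)$ is minimally infinitesimally angle-rigid; since $|E|=2|V|-2=2|V|+|c|-4$ this means full row rank of the angle-rigidity matrix, that is, independence. As independence is a Zariski-open condition on the realisation and $p$ is generic, $(G,c,p)$ is independent, contradicting the dependence just derived.

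The main obstacle is the converse direction, and its whole content is funnelling it into \Cref{thm:2col}: the claim is exactly that no proper, non-empty bipartition of the edges of an $\mathcal{R}_2$-circuit can be simultaneously balanced against the single stress, which is precisely the generic independence guaranteed by the two-colour characterisation. The reverse direction and the structural facts about circuits are routine.
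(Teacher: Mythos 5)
Your proof is correct and takes essentially the same route as the paper: two-colour the edges as $F$ and $E\setminus F$, check condition \ref{thm:2col2} of \Cref{thm:2col} (using $|E|=2|V|-2$ and the fact that a circuit is its own unique circuit), and use the resulting generic independence of $(G,c,p)$ to rule out a nonzero stress balanced on both colour classes. The paper's proof is simply a compressed version of this, leaving implicit the full-edge identity, the subtraction giving $\sum_{vw\in E\setminus F}\omega_{vw}\|p_v-p_w\|^2=0$, and the genericity argument that you spell out.
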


\begin{proof}
    Choose any proper subset $F \subset E$.
    Define $\cmap$ to be the coloring of $G$, where $\cmap(e) = c_1$ if $e \in F$ and $\cmap(e) = c_2$ if $e \in E \setminus F$.
    By \Cref{thm:2col},
    $(G,\cmap,p)$ is minimally infinitesimally angle-rigid.
    Hence, $\sum_{vw \in F} \omega_{vw} \|p_v-p_w\|^2 \neq 0$ as required.
\end{proof}

\subsection{Computational results}

\Cref{thm:2col} gives a tool to construct all 2-colored graphs $(G,\cmap)$ that are minimally angle-rigid.
Our other tool to test a given $(G,\cmap)$ for angle-rigidity is to apply \Cref{prop:anglerigid} to a random realization $p$. In the case when $(G,\cmap,p)$ is infinitesimally angle-rigid we know $(G,\cmap)$ is also angle-rigid. On the other hand, if $(G,\cmap,p)$ is not infinitesimally angle-rigid, $(G,\cmap)$ can only be assumed to be non-angle-rigid, if we assume our random realization was sufficiently generic. Using these two tools we can analyze the angle-rigid colored graphs computationally.

Let $G$ be a graph with $|E|=2|V|-2$. We say that $G$ is \emph{2-color-rigid} if there is a color map $\cmap$ with $|\cimg|=2$ such that $(G,\cmap)$ is minimally angle-rigid.
For $|V|=5$ we have two 2-color-rigid graphs ($G_1$ and $G_2$ from \Cref{fig:v5}). $G_1$ admits 45 different 2-color maps (up to isomorphism) such that $(G_1,\cmap)$ is angle-rigid.
$G_2$ has 26 such maps. Hence, in total there are 71 non-isomorphic colored graphs\footnote{Here we say two colored graphs $(G,\cmap)$ and $(G',c')$ are isomorphic if there exists a graph isomorphism $\phi:G \rightarrow G'$ so that $c = c' \circ \phi$.} 
$(G,\cmap)$ with $|\cimg|=2$ that  are minimally angle-rigid. See \Cref{tab:counting} for further data.
\begin{table}[ht]
    \centering
    \begin{tabular}{rrrr}
        \toprule
         $|V|$ & graphs & 2-color-rigid & 2-colored angle-rigid \\\midrule
         4 & 1 & 1 & 5\\
         5 & 2 & 2 & 71\\
         6 & 12 & 12 & 2227\\
         7 & 97 & 91 & 99148\\
         8 & 1113 & 1003 \\
         9 & 17117 & 14870 \\
         \bottomrule
    \end{tabular}
    \caption{Graphs with $|E|=2|V|-2$ and minimum degree 2 in the second column and the number of 2-color-rigid graphs in column three. Last column: number of non-isomorphic colored graphs $(G,\cmap)$ with $|\cimg|=2$ which are minimally angle-rigid.}
    \label{tab:counting}
\end{table}

\begin{figure}[ht]
    \centering
    \begin{tikzpicture}[scale=1.2]
        \node[vertex] (1) at (0,0) {};
        \node[vertex] (2) at (1,0) {};
        \node[vertex] (3) at (1,1) {};
        \node[vertex] (4) at (0,1) {};
        \node[vertex] (5) at (-0.5,0.5) {};
        \draw[edge] (2)--(3) (2)--(4) (3)--(4);
        \draw[edge] (1)--(2) (1)--(3) (1)--(4);
        \draw[edge] (1)--(5) (4)--(5);
        \node[labelsty] at (0.5,-0.4) {$G_1$};
    \end{tikzpicture}
    \qquad
    \begin{tikzpicture}[scale=1.2]
        \node[vertex] (1) at (0,0) {};
        \node[vertex] (2) at (1,0) {};
        \node[vertex] (3) at (1,1) {};
        \node[vertex] (4) at (0,1) {};
        \node[vertex] (5) at (0.5,0.6) {};
        \draw[edge] (2)--(3) (3)--(4);
        \draw[edge] (1)--(2) (1)--(4);
        \draw[edge] (1)--(5) (2)--(5) (3)--(5) (4)--(5);
        \node[labelsty] at (0.5,-0.4) {$G_2$};
    \end{tikzpicture}
    \caption{Two graphs with 5 vertices that are 2-color-rigid.}
    \label{fig:v5}
\end{figure}

We have seen in \Cref{fig:K4col} that $K_4$ has 5 possible color maps in two colors which give a rigid structure. We have computed the number of such color maps for all graphs up to 7 vertices. There is for instance a graph with only 7 vertices that has more than 2000 possible color maps.
In \Cref{tab:colors} we show the minimum and maximum number of color maps obtained by graphs with less than 8 vertices.

Similarly to 2-color-rigid graphs we can take more general \emph{$k$-color-rigid} graphs which have $|E|=2|V|+k-4$ and a $k$-color map $\cmap$ that gives a minimally angle-rigid structure.
Note that, computations are done using \Cref{prop:anglerigid}, hence, they use random realizations and might therefore yield false negatives.
\Cref{tab:kcolor} shows how many $k$-color-rigid graphs there are with few vertices.

\begin{table}[ht]
    \raggedright
    \begin{minipage}[t]{0.45\textwidth}
        \centering
        \begin{tabular}{rrr}
            \toprule
             $|V|$ & minimum      & maximum \\
                   & 2-color maps & 2-color maps\\\midrule
             4 & 5 & 5 \\
             5 & 26 & 45 \\
             6 & 67 & 304 \\
             7 & 46 & 2047 \\
             \bottomrule
        \end{tabular}
        \captionof{table}{For all 2-color-rigid graphs we count the number of possible 2-color maps. The minimum and maximum of these numbers for a given $|V|$ is shown in the table.}
        \label{tab:colors}
    \end{minipage} \hspace{0.5cm} 
    \begin{minipage}[t]{0.45\textwidth}
        \centering
        \begin{tabular}{rrrrrr}
             \toprule
             $|V|$ & 2-color-rigid & 3-color-rigid & 4-color-rigid \\\midrule
             4     &           1 &           - &           - \\
             5     &           2 &           1 &           1 \\
             6     &          12 &           8 &           5 \\
             7     &          91 &          80 &          59 \\
             8     &        1003 &        1168 &             \\
             9     &       14870 & \\
             \bottomrule
        \end{tabular}
        \captionof{table}{Number of $k$-color-rigid graphs with $k\leq 6$ and less than 10 vertices.}
        \label{tab:kcolor}
    \end{minipage}
\end{table}

\section{The Algebraic Angle-Rigidity Matroid}
\label{sec:algmatroid}

One important toolbox frequently employed in rigidity theory is the theory of matroids. 
The ``independent sets'' in the classical rigidity setting can be defined using linear independence among
rows of the rigidity matrix, yielding a \emph{linear matroid}. Independence can also be taken as algebraic independence of the 
distances $\{d_{vw} = X_{vw}^2 + Y_{vw}^2\}$ in the field extension $\CC(x_v,y_v)$; this defines an
\emph{algebraic matroid} (see \cite{rosen2020algebraic} for an elementary introduction).
Since our angle rigidity matrix indexes its rows by edges as opposed to angles, the linear matroid is not immediately 
adaptable. Instead, we construct an algebraic matroid.

Let $G = (V,E)$ be a graph with an angle index set $A$ and let $e,f \in E$. 
Our previously defined angle map $\theta_A: R^G \rightarrow \RR^{A}$ has coordinates
$ \left(\arccos \left( \frac{(p_a-p_b)\cdot (p_c-p_d)}{\|p_a-p_b\| \|p_c-p_d\|} \right) \right)_{\{ab,cd\} \in A}$ which are related only via transcendental functions.
In this section, we construct the map $\theta_{ef} \to \exp(2\ci \:\theta_{ef})$, 
obtaining a set of complex numbers which are related algebraically
precisely when the original angles satisfy a geometric constraint.  
This allows us to define the \emph{algebraic angle-rigidity matroid} in terms of algebraic relations.

\begin{definition}
    Let $G = (V,E)$ be a graph, with $e,f \in E$. 
    The \emph{algebraic angle-rigidity matroid} $\mathcal{A}(G)$ (over $\fieldk$) is the algebraic matroid on the elements
    $\alpha_{ef} = X_{e}Y_{f}X_{f}^{-1}Y_{e}^{-1}$
    in the function field $\fieldk(x_1,\ldots,x_n,$ $y_1,\ldots,y_n)$. For $A$ an angle index set of $G$, let
    $\alpha_A =\{\alpha_{ef}:\{e,f\} \in A\}$.
\end{definition}

The justification of this formula for angles is as follows:
Suppose $p_1 = (\tilde{x}_1,\tilde{y}_1),\ldots, p_4 = (\tilde{x}_4,\tilde{y}_4)$ and we
want to measure the angle between line segment $L_{12}$ joining $p_1$ and $p_2$ and $L_{34}$ joining $p_3$ and $p_4$. We perform the following geometric operations:

\paragraph{Translate Segments to the Origin} Then, both $L_{12}$ and $L_{34}$ have an endpoint at the origin. After this $p_1' = (\tilde{X}_{12}, \tilde{Y}_{12})$ and $p_3' = (\tilde{X}_{34},\tilde{Y}_{34})$, while $p_2' = p_4' = 0$.
    
\paragraph{Rescale to the Unit Circle} We apply a complex change of coordinates
    inspired by \cite{capco2018counting}. Send $(\tilde{x}_k,\tilde{y}_k) \mapsto (\tilde{x}_k+ \ci\tilde{y}_k,\tilde{x} - \ci\tilde{y}_k)$ a conjugate pair of complex numbers.
    Rename these coordinates $x_k = \tilde{x}_k + \ci\tilde{y}$ and $ y_k = \tilde{x}_k - \ci\tilde{y}_k$.
    This can be extended to an invertible linear map on $\CC^2$, so preserves the underlying geometry.  Applying this transformation, $p_1'$ and $p_3'$ become
    \[    p_1''  = (\tilde{X}_{12} + \ci \tilde{Y}_{12},\quad \tilde{X}_{12} - \ci \tilde{Y}_{12}) \hspace{18mm}
        p_3'' = (\tilde{X}_{34} + \ci \tilde{Y}_{34}, \tilde{X}_{34} - \ci \tilde{Y}_{34}) \]
        or in the new variable names:
    $p_1'' = (X_{12},Y_{12})$  and $ p_3'' = (X_{34},Y_{34})$.
    Convert these coordinates to polar form, keeping in mind that the pairs are complex conjugates:
    \begin{align*}
        p_1'' &= (X_{12},Y_{12})=(r_{12}e^{\ci\theta_{12}},  r_{12}e^{-\ci\theta_{12}}) & \hspace{1.5cm}
        p_3'' &=  (X_{34}, Y_{34})  =  (r_{34}e^{\ci\theta_{34}}, r_{34}e^{-\ci\theta_{34}})
    \end{align*}
    To drop the magnitude $r_{kl}$, we can divide the two coordinates. In particular:
    \[
        p_1''' = X_{12}/Y_{12}= e^{2\ci\theta_{12}},  \hspace{2.1cm}
        p_3''' = X_{34}/Y_{34} = e^{2\ci\theta_{34}}.
    \]
    The unfortunate side effect is that $\theta_{kl}$ gets doubled, but this still specifies $\theta_{kl}$ up to $\pi$. Now both line segments (with angle doubled) terminate at the unit circle.
    
\paragraph{Rotate the Unit Circle.}
    We divide everything by $e^{2\ci\theta_{34}} = X_{34}/Y_{34}$ so that $p_3'''$ rotates to 1
    and $p_1'''$ rotates to  
    \[
        e^{2\ci(\theta_{12}-\theta_{34})} = X_{12}Y_{34}X_{34}^{-1}Y_{12}^{-1}
    \]
    Thus, we define the angle variable $\alpha_{(12)(34)}$ as the
    rational function on the right. Note that the traditional angle value in $(0,2\pi)$ can be computed via $\frac{1}{2\ci} \ln a_{(12)(34)}$, using the principal branch of the complex logarithm as claimed above. The field elements themselves are compactly 
    summarized as a Laurent monomial function of linear forms.

\begin{remark}\label{rem:martin}
    The angles can also be defined in terms of the slope matroid defined by \cite{martin2003}.
    Here Martin defines, for each $e = (uv)$, the equation
    $ m_{e} =Y_{e}X_{e}^{-1} = Y_{uv}X_{uv}^{-1}$.
    In Martin's work, no complex coordinate change
    is used; instead, he defines slopes in the traditional geometric sense.
    Still, using that formula, we have:
    $ a_{ef} = m_{f}/m_{e}$,
    where $m_{e}$ and $m_{f}$ are elements in the function field $\fieldk(m_{e})/\mathcal{S}(G)$,
    where $\mathcal{S}(G)$ is the ideal of the \emph{slope variety} defined in \cite{martin2003}.
\end{remark}

In this definition, we do not use the fact that there is a difference between $\alpha_{ef}$ and $\alpha_{fe}$. This ambiguity is justified by
\ref{it:tree:inv} in the next proposition, which observes that the field extension generated by the set of angles is invariant under shuffling the edges in an angle index.

\begin{proposition}\label{prop:tree}
    Let $A$ be an angle index set.  The following properties are satisfied by $\alpha_A$:
    \begin{enumerate}
        \item\label{it:tree:inv} $\alpha_{ef} = (\alpha_{fe})^{-1}$.
        \item\label{it:tree:cycle} If $\tilde{G}(A)$ has a cycle, then $\alpha_A$ satisfies a nontrivial polynomial relation.
        \item\label{it:tree:tree} If $\tilde{G}(A)$ is a tree and $\alpha_A$ satisfies a nontrivial polynomial relation,
            then so does every $A'$ such that $\tilde{G}(A')$ is connected on the 
            same support.
        \item\label{it:tree:forest} Suppose $\tilde{G}(A)$ is a forest with connected components $T_1,T_2,\dots, T_k$
            and $\alpha_A$ satisfies a nontrivial polynomial relation.
            If $A'$ is another angle index set for which $\tilde{G}(A')$ is a forest, and its connected components $T_1',\ldots,T_k'$
            satsify $E(T_i) = E(T_i')$, then $\alpha_{A'}$ also satisfies a nontrivial polynomial relation.
    \end{enumerate}
\end{proposition}

\begin{proof}
    \ref{it:tree:inv} falls directly out of the formula.

    For \ref{it:tree:cycle}, take the edge sequence of the cycle $e_1,e_2,\ldots
    e_k$. Observe that multiplying all the $\alpha_{e_1e_2},
    \ldots \alpha_{e_ke_1}$ results in every factor
    $X_{e_i}$ and $Y_{e_i}$ appearing exactly
    once each in numerator and denominator; hence,
    $\alpha_{e_1e_2} \alpha_{e_2e_3}\cdots \alpha_{e_ke_1} = 1.$
  
    For \ref{it:tree:tree}, take any angle index ${ef} \in A \setminus A'$.
    As the graph $\tilde{G}(A') + ef$ contains a cycle with $ef$,
    we can apply \ref{it:tree:cycle} to this cycle to write $\alpha_{ef}$ in terms of angles of $A'$,
    here using the fact that the product of a cycle equals 1.
    We now note that if $P$ is a non-trivial polynomial with $P(\alpha_A) = 0$,
    then replacing each of those in the nontrivial relation $P(\alpha_A) = 0$ yields a nontrivial rational function $q(\alpha_{A'}) = 0$, and clearing
    denominators yields the result.
    This method also directly implies \ref{it:tree:forest}.
\end{proof}

Part \ref{it:tree:cycle} of \Cref{prop:tree} implies that algebraically independent sets $\alpha_A$ must be \emph{acyclic},
(i.e. $\tilde{G}(A)$ has no cycles).
In addition,
part \ref{it:tree:tree} and \ref{it:tree:forest} of \Cref{prop:tree} leads to the same observation made in
Section 2, reframing the problem in terms of graphs with edge colorings.

\begin{corollary}\label{cor:algmat}
    The algebraic (in)dependence of an acyclic set of angles $A$ is determined by the graph $G$ and the partition into colors $c$ based on the connected components of $\tilde{G}(A)$.
\end{corollary}

It follows from \Cref{cor:algmat} that every set of angles $A$ generates a corresponding colored graph $(G,\cmap)$ by labeling the components of $\tilde{G}(A)$ as $C_1,\ldots,C_k$ and fixing $\cmap$ to be the map where $\cmap(e) = c_i$ if and only if $e \in C_i$.
This observation allows us to freely change our choice of angles $A$ to another set $A'$ so long as the connected components of $\tilde{G}(A)$ and $\tilde{G}(A')$ share the same vertices.
This can simplify computations, as is the case in the next lemma.

\begin{lemma}\label{lemma:star}
    Suppose $\tilde{G}(A)$ a tree, and $f \in E(A)$.
    Take $m_{e} = X_{e}/Y_{e}$ to be the variable defined in \Cref{rem:martin}.
    Then, $\fieldk(\alpha_A \cup \{m_{f}\}) = \fieldk(m_e:e \in E(A))$.
\end{lemma}

\begin{proof}
    By \Cref{prop:tree}, $A$ can be taken so that $\tilde{G}(A)$ is a star with central vertex $f$.
    Every $\alpha_{ef} = m_{e}/m_{f}$, so for every $e \in E(A)$, $\alpha_{ef}m_{f} = m_{e}$.
\end{proof}

With these basic properties established, we now demonstrate
that algebraic independence among $\alpha_A$ corresponds precisely to minimally angle-rigid graphs.

\begin{theorem}\label{thm:algmat}
    An angle index set $A$ with $\tilde{G}(A)$ acyclic defines a basis in the algebraic matroid $\mathcal{A}(G)$ if and only if the corresponding colored graph $(G,\cmap)$ is minimally angle-rigid.
\end{theorem}

\begin{proof}
    For convenience, we denote squared distance $S_{e} = X_{e}Y_{e}$. A set of elements in a field extension $\mathbb{F}/\fieldk$ of characteristic $0$ is algebraically independent over $\mathbb{F}$ if and only if the corresponding set of differentials is linearly independent in $\Omega_{\mathbb{F}/\fieldk}$ \cite[Theorem 16.14]{eisenbud2013}.
    We set $\mathbb{F} = \fieldk(x_v,y_v: v \in V)$ with all $x,y$ transcendental over $\fieldk$; note that $\mathbb{F}$ contains $X_{uv} = x_u - x_v$, $Y_{uv}, S_{uv}$, etc. Let $e = st$ and $f = uv$. Differentiate the defining equations of $\alpha_{ef}$ and $m_{e}$ to obtain:
    \begin{align*}
        \mathrm{d}\alpha_{ef} & = \mathrm{d}\left(m_{e}m_{f}^{-1} \right) = m_{f}^{-1}\mathrm{d}m_{e} - m_{e}m_{f}^{-2}\mathrm{d}m_{f} = a_{ef} \left(m_{e}^{-1} \mathrm{d} m_{e} - m_{kl}^{-1}\mathrm{d}m_{kl}\right) \\
        \mathrm{d}m_{e} & = \mathrm{d}\left(X_{e}Y_{e}^{-1}\right) =\dfrac{ \mathrm{d} x_{s}- \mathrm{d} x_{t}}{Y_{e}} - \dfrac{X_{e}(\mathrm{d}y_s - \mathrm{d}y_t)}{Y_{e}^2} = \dfrac{1}{Y_{e}^2}\bigg[ Y_{e}(\mathrm{d} x_{s}- \mathrm{d} x_{s}) - X_{e}(\mathrm{d}y_s - \mathrm{d}y_t)\bigg].
    \end{align*}
    Without affecting linear independence, we may rescale each $\mathrm{d}\alpha_{ef}$ and instead consider the vectors:
    \begin{align*}
        v_{ef} & := \dfrac{S_{e}S_{f}}{\alpha_{ef}}\mathrm{d}\alpha_{ef} =
        S_{e}S_{f}\left(m_{e}^{-1} \mathrm{d} m_{e} - m_{f}^{-1}\mathrm{d}m_{f}\right) = S_{f} Y_{e}^2 \mathrm{d} m_{e} - S_{e}Y_{f}^2\mathrm{d}m_{f} \\
        &~ =  S_{f} \big[ Y_{e}(\mathrm{d} x_{s}- \mathrm{d} x_{t}) - X_{e}(\mathrm{d}y_s - \mathrm{d}y_t)\big] - S_{e}\big[ Y_{f}(\mathrm{d} x_{u}- \mathrm{d} x_{v}) - X_{f}(\mathrm{d}y_u - \mathrm{d}y_v)\big].
    \end{align*}
    The $x,y$ variables are taken to be algebraically independent, so their differentials $\mathrm{d}x_v, \mathrm{d}y_v$ are
    linearly independent. Thus we may take these as a basis of a field extension $\mathbb{F}/\fieldk$, and encode the vectors $v_{ef} \in \Omega_{\mathbb{F}/\fieldk}$ as rows of a $(2|V|-4)\times 2|V|$ matrix $M(A)$.
    With this in mind, $M(A)$ is of the form below, where all unspecified entries are zero:
    \begin{small}
        \[
            \bordermatrix{ & \mathrm{d}x_s & \mathrm{d}y_s & \cdots &\mathrm{d} x_t & \mathrm{d}y_t & \cdots& \mathrm{d} x_u & \mathrm{d}y_u &\cdots & \mathrm{d}x_v & \mathrm{d}y_v  \cr
            \hspace{2mm} \vdots & \cr
            v_{ef}& S_{f} Y_{e} & -S_{f} X_{e} & & -S_{f}Y_{e} & S_{f}X_{e} & & -S_{e}Y_{f} & S_{e} X_{f} & & S_{e}Y_{f} & -S_{e}X_{f} \cr
            \hspace{2mm} \vdots & }.
        \]
    \end{small}
    
    \noindent Noting that the coefficients of the entries in $M(A)$ are all rational, we conclude that the $\mathrm{d}\alpha_{ef}$ are linearly independent (and hence $\alpha_A$ is algebraically independent) if and only if $M(A)$ has full rank for any choice of $x_v,y_v$ algebraically independent over $\mathbb{Q}$.

    Now fix a set $\{x_v,y_v\}_{v \in V} \subset \RR$, algebraically independent over $\mathbb{Q}$,
    and take $p:V \rightarrow \RR^2$ to be the realization where $p_i = (x_i,y_i)$.
    The matrix $M(A)$ can be factored as $M(A) = T_{\widetilde{G}(A)} \cdot R(G,p^\perp)$,
    where $R(G,p^\perp)$ is the rigidity matrix of the rotated framework $(G,p^\perp)$ and $T_{\widetilde{G}(A)}$ is a $(2|V|-4) \times (2|V|+|\cimg|-4)$ block matrix described as follows. Each block corresponds to a star in $\widetilde{G}(A)$. If the star has leaves $\ell_1,\ldots,\ell_r$ and center $f$, then the corresponding block is:
    \begin{equation*}
        \bordermatrix{
            & m_{\ell_1} & \cdots & m_{\ell_r} & m_f \cr
            \alpha_{\ell_1,f} & -S_{f} & & & S_{\ell_1} \cr
            \: \: \: \vdots & & \ddots & & \vdots \cr
            \alpha_{\ell_r,f} & & & -S_f & S_{\ell_r}
        }.
    \end{equation*}
    Since $(G,\cmap)$ was formed from $A$,
    every edge of $G$ is in the subscript of some angle in $A$,
    and so the matrix $T_{\widetilde{G}(A)}$ has rank $2|V|-4$.
    
    We now form one more matrix.
    Fix $J$ to be the $(2|V| +|\cimg|-4) \times (2|V|-4)$ matrix formed from $R(G,c,p^\perp)$ by replacing each entry $-(X_{e}^2 +Y_{e}^2)$ in the color column for the edge $e$ with $S_{e}$.
    We observe that
    \begin{equation*}
        T_{\widetilde{G}(A)} J = \left(T_{\widetilde{G}(A)} R(G,p^\perp) \quad \mathbf{0} \right) = \left(M(A) \quad \mathbf{0} \right).
    \end{equation*}
    Hence, the left kernels of $M(A)$ and $T_{\widetilde{G}(A)} J$ are the same.
    
    We claim that the left nullity of $M(A)$ and $J$ are equal.
    As $T_{\widetilde{G}(A)}$ has rank $2|V|-4$,
    the left nullity of $M(A)$ is at most the left nullity of $J$.
    Now choose $\omega$ in the left kernel of $J$.
    For each color $i$ we have $\sum_{\cmap(e)=i} \omega_e S_e = 0$.
    This is equivalent to the observation that for every star in $\widetilde{G}(A)$ with star center $f$ and leaves $\ell_1,\ldots,\ell_r$,
    we have
    \begin{equation}\label{eq:star}
        \omega_f= -\sum_{i=1}^r \frac{\omega_{\ell_i}}{S_f} S_{\ell_i}.
    \end{equation}
    Define the element $\nu \in \fieldk^A$ where $\nu(a_{\ell_i,f}) = \omega_{\ell_i}/S_f$ for every angle $a_{\ell_i,f}$ where $f$ is a star center in $\widetilde{G}(A)$.
    It now follows from \Cref{eq:star} that $\nu^T T_{\widetilde{G}(A)} = \omega$,
    hence
    \begin{equation*}
        \nu^T T_{\widetilde{G}(A)} J = \omega^T J = 0.
    \end{equation*}
    Since no element of the left kernel can be supported only on the star centers of $\widetilde{G}(A)$, the map $\omega \mapsto \nu$ is injective and linear. Therefore,
    it follows that the left nullity of $M(A)$ is at most the left nullity of $J$.
    Hence, the left nullities of $M(A)$ and $J$ are equal.
    
    By manipulating rows and using the linear transform
        $x = \tilde{x} + \ci\tilde{y}$,  $y = \tilde{x} - \ci\tilde{y}$,
    it is simple to show that $J$ has the same rank as $R(G,\cmap,p)$.
    Hence, $M(A)$ has full rank if and only if $R(G,\cmap,p)$ has full rank and the result follows from \Cref{thm:lineangdirection}.
\end{proof}

\addcontentsline{toc}{section}{Acknowledgments}
\section*{Acknowledgments}

This project originated from the Fields Institute
Thematic Program on Geometric Constraint Systems, Framework Rigidity, and Distance Geometry and benefited from time at the Fields Institute
Focus Program on Geometric Constraint Systems.
The authors are grateful to the Fields Institute for their hospitality and financial support.
S.\,D.\ was supported by the Heilbronn Institute for Mathematical Research.
G.\,G.\ was partially supported by the Austrian Science Fund (FWF): 10.55776/P31888.
A.\,N.\ was partially supported by EPSRC grant EP/X036723/1.

\bibliographystyle{alphaurl}
\bibliography{lib}

\newcommand{\etalchar}[1]{$^{#1}$}
\begin{thebibliography}{EWM{\etalchar{+}}03}

\bibitem[CCL21]{chen2020angle}
Liangming Chen, Ming Cao, and Chuanjiang Li.
\newblock Angle rigidity and its usage to stabilize multiagent formations in
  2-d.
\newblock {\em IEEE Transactions on Automatic Control}, 66(8):3667--3681, 2021.
\newblock \href {https://doi.org/10.1109/TAC.2020.3025539}
  {\path{doi:10.1109/TAC.2020.3025539}}.

\bibitem[CGG{\etalchar{+}}18]{capco2018counting}
Jose Capco, Matteo Gallet, Georg Grasegger, Christoph Koutschan, Niels Lubbes,
  and Josef Schicho.
\newblock The number of realizations of a {Laman} graph.
\newblock {\em SIAM Journal on Applied Algebra and Geometry}, 2(1):94--125,
  2018.
\newblock \href {https://doi.org/10.1137/17M1118312}
  {\path{doi:10.1137/17M1118312}}.

\bibitem[Con82]{connelly1982energy}
Robert Connelly.
\newblock Rigidity and energy.
\newblock {\em Inventiones Mathematicae}, 66:11--33, 1982.
\newblock \href {https://doi.org/10.1007/BF01404753}
  {\path{doi:10.1007/BF01404753}}.

\bibitem[Eis13]{eisenbud2013}
David Eisenbud.
\newblock {\em Commutative algebra: with a view toward algebraic geometry},
  volume 150.
\newblock Springer Science \& Business Media, 2013.
\newblock \href {https://doi.org/10.1007/978-1-4612-5350-1}
  {\path{doi:10.1007/978-1-4612-5350-1}}.

\bibitem[EWM{\etalchar{+}}03]{eren2003sensor}
Tolga Eren, Walter Whiteley, A.~Stephen Morse, Peter~N. Belhumeur, and
  Brian~{D.\,O.} Anderson.
\newblock Sensor and network topologies of formations with direction, bearing,
  and angle information between agents.
\newblock In {\em 42nd IEEE International Conference on Decision and Control
  (IEEE Cat. No. 03CH37475)}, volume~3, pages 3064--3069. IEEE, 2003.
\newblock \href {https://doi.org/10.1109/CDC.2003.1273093}
  {\path{doi:10.1109/CDC.2003.1273093}}.

\bibitem[GSS93]{gss1993}
Jack Graver, Brigitte Servatius, and Herman Servatius.
\newblock {\em {Combinatorial rigidity}}.
\newblock {American Mathematical Society}, Providence, RI, 1993.
\newblock \href {https://doi.org/10.1090/gsm/002} {\path{doi:10.1090/gsm/002}}.

\bibitem[HLSS{\etalchar{+}}12]{haller2012}
Kirk Haller, Audrey Lee-St.John, Meera Sitharam, Ileana Streinu, and Neil
  White.
\newblock Body-and-cad geometric constraint systems.
\newblock {\em Computational Geometry}, 45(8):385--405, 2012.
\newblock Geometric Constraints and Reasoning.
\newblock \href {https://doi.org/10.1016/j.comgeo.2010.06.003}
  {\path{doi:10.1016/j.comgeo.2010.06.003}}.

\bibitem[JH97]{jacobs1997pebble}
Donald~J. Jacobs and Bruce Hendrickson.
\newblock An algorithm for two-dimensional rigidity percolation: The pebble
  game.
\newblock {\em Journal of Computational Physics}, 137(2):346--365, 1997.
\newblock \href {https://doi.org/10.1006/jcph.1997.5809}
  {\path{doi:10.1006/jcph.1997.5809}}.

\bibitem[JO16]{jackson2016characterisation}
Bill Jackson and John Owen.
\newblock A characterisation of the generic rigidity of 2-dimensional
  point--line frameworks.
\newblock {\em Journal of Combinatorial Theory, Series B}, 119:96--121, 2016.
\newblock \href {https://doi.org/10.1016/j.jctb.2015.12.007}
  {\path{doi:10.1016/j.jctb.2015.12.007}}.

\bibitem[JZLW19]{jing2019angle}
Gangshan Jing, Guofeng Zhang, Heung Wing~Joseph Lee, and Long Wang.
\newblock Angle-based shape determination theory of planar graphs with
  application to formation stabilization.
\newblock {\em Automatica}, 105:117--129, 2019.
\newblock \href {https://doi.org/10.1016/j.automatica.2019.03.026}
  {\path{doi:10.1016/j.automatica.2019.03.026}}.

\bibitem[Lam70]{laman1970}
Gerard Laman.
\newblock On graphs and rigidity of plane skeletal structures.
\newblock {\em Journal of Engineering mathematics}, 4(4):331--340, 1970.
\newblock \href {https://doi.org/10.1007/BF01534980}
  {\path{doi:10.1007/BF01534980}}.

\bibitem[Mar03]{martin2003}
Jeremy Martin.
\newblock Geometry of graph varieties.
\newblock {\em Transactions of the American Mathematical Society},
  355(10):4151--4169, 2003.
\newblock \href {https://doi.org/10.1090/S0002-9947-03-03321-X}
  {\path{doi:10.1090/S0002-9947-03-03321-X}}.

\bibitem[PG27]{pollaczekgeiringer1927}
Hilda Pollaczek-Geiringer.
\newblock {\"Uber die Gliederung ebener Fachwerke}.
\newblock {\em ZAMM-Journal of Applied Mathematics and Mechanics/Zeitschrift
  f{\"u}r Angewandte Mathematik und Mechanik}, 7(1):58--72, 1927.
\newblock \href {https://doi.org/10.1002/zamm.19270070107}
  {\path{doi:10.1002/zamm.19270070107}}.

\bibitem[RST20]{rosen2020algebraic}
Zvi Rosen, Jessica Sidman, and Louis Theran.
\newblock Algebraic matroids in action.
\newblock {\em The American Mathematical Monthly}, 127(3):199--216, 2020.
\newblock \href {https://doi.org/10.1080/00029890.2020.1689781}
  {\path{doi:10.1080/00029890.2020.1689781}}.

\bibitem[SST22]{schulze2022coordinated}
Bernd Schulze, Hattie Serocold, and Louis Theran.
\newblock Frameworks with coordinated edge motions.
\newblock {\em SIAM Journal on Discrete Mathematics}, 36(4):2602--2618, 2022.
\newblock \href {https://doi.org/10.1137/20M1377539}
  {\path{doi:10.1137/20M1377539}}.

\bibitem[SW99]{servatius1999constraining}
Brigitte Servatius and Walter Whiteley.
\newblock Constraining plane configurations in computer-aided design:
  Combinatorics of directions and lengths.
\newblock {\em SIAM Journal on Discrete Mathematics}, 12(1):136--153, 1999.
\newblock \href {https://doi.org/10.1137/S0895480196307342}
  {\path{doi:10.1137/S0895480196307342}}.

\bibitem[SW04]{saliola2004constraining}
Franco Saliola and Walter Whiteley.
\newblock Constraining plane configurations in {CAD}: circles, lines, and
  angles in the plane.
\newblock {\em SIAM Journal on Discrete Mathematics}, 18(2):246--271, 2004.
\newblock \href {https://doi.org/10.1137/S0895480100374138}
  {\path{doi:10.1137/S0895480100374138}}.

\bibitem[Whi87]{whiteley1987parallel}
Walter Whiteley.
\newblock Parallel redrawings, 1987.
\newblock Preprint.
\newblock \href {https://doi.org/10.13140/RG.2.2.13701.91365}
  {\path{doi:10.13140/RG.2.2.13701.91365}}.

\bibitem[Whi96]{whiteley1996}
Walter Whiteley.
\newblock Some matroids from discrete applied geometry.
\newblock In {J.\,E.} Bonin, {J.\,G.} Oxley, and B.~Servatius, editors, {\em
  {Matroid theory}}, volume 197 of {\em Contemporary Mathematics}, pages
  171--311. American Mathematical Society, Providence, RI, 1996.
\newblock \href {https://doi.org/10.1090/conm/197/02540}
  {\path{doi:10.1090/conm/197/02540}}.

\bibitem[Zho06]{zhou2006combinatorial}
Yong Zhou.
\newblock {\em Combinatorial decomposition, generic independence and algebraic
  complexity of geometric constraints systems: applications in biology and
  engineering}.
\newblock PhD thesis, University of Florida, 2006.

\bibitem[ZZ15]{zhao2015bearing}
Shiyu Zhao and Daniel Zelazo.
\newblock Bearing rigidity and almost global bearing-only formation
  stabilization.
\newblock {\em IEEE Transactions on Automatic Control}, 61(5):1255--1268, 2015.
\newblock \href {https://doi.org/10.1109/TAC.2015.2459191}
  {\path{doi:10.1109/TAC.2015.2459191}}.

\end{thebibliography}

\end{document}